\theoremstyle{plain}
\newtheorem {theorem}{Theorem}[section]
\newtheorem {lemma}[theorem]{Lemma}
\newtheorem {corollary} [theorem]{Corollary}
\theoremstyle{definition}
\newtheorem{definition}[theorem]{Definition}
\newtheorem{remark}[theorem]{Remark}
\newtheorem{example}[theorem]{Example}
\theoremstyle{remark}
 \numberwithin{equation}{section} 
\newcommand{\G}{\Gamma}
\newcommand{\R}{\mathbb{R}}
\newcommand{\N}{\mathbb{N}}
\newcommand{\Rn}{\R^n}
\newcommand{\D}{\mathcal{D}}
\newcommand{\A}{\mathcal{A}}
\newcommand{\E}{\mathscr{E}}
\newcommand{\oo}{\infty}
\newcommand{\ov}{\overline}
\newcommand{\ee}{\text{e}}
\renewcommand{\phi}{\varphi}
\newcommand{\e}{\varepsilon}
\newcommand{\0}{\vartheta}
\newcommand{\w}{\omega}
\renewcommand{\d}{\partial}
\renewcommand{\a}{\alpha}
\renewcommand{\b}{\beta}
\renewcommand{\k}{\kappa}
\newcommand{\g}{\gamma}
\newcommand{\la}{\lambda}
\newcommand{\de}{\delta}
\newcommand{\De}{\Delta}
\newcommand{\sgn}{\text{sgn}}
\renewcommand{\l}{\ell}
\newcommand{\kk}{\dot{\kappa}}
\renewcommand{\gg}{\dot{\gamma}}
\newcommand{\pphi}{\dot{\varphi}}
\newcommand{\Dev}{\mathrm{D}}
\newcommand{\cut}{\mathrm{cut}}
\newcommand{\id}{\mathrm{id}}
\renewcommand{\.}{\dots}
\begin{document}

\title[Non-minimality of spirals]{Non-minimality of spirals in sub-Riemannian manifolds}

\author[R.~Monti]{Roberto Monti} 
\email{monti@math.unipd.it}

        \address
{Universit\`a di Padova, Dipartimento di Matematica  ``Tullio Levi-Civita'',
via Trieste 63, 35121 Padova, Italy}

 \author[A.~Socionovo]{Alessandro Socionovo} 
\email{socionov@math.unipd.it}

        \address
{Universit\`a di Padova, Dipartimento di Matematica  ``Tullio Levi-Civita'',
via Trieste 63, 35121 Padova, Italy}

\begin{abstract}  We show that in analytic sub-Riemannian manifolds of rank 2 satisfying a commutativity condition spiral-like curves are not length minimizing near the center of the spiral. The proof relies upon the delicate construction of a competing curve.
\end{abstract}

\maketitle


\section{Introduction}

The regularity of geodesics (length-minimizing curves) in sub-Riemannian geometry 
is an open problem   since forty years. Its difficulty is due to the presence of singular (or abnormal) extremals, i.e.,
curves where the differential of the end-point map is singular (it is not surjective).
There exist singular curves that are as a matter of fact length-minimizing. The first example was discovered in \cite{Montgomery} and other classes of examples (regular abnormal extremals) are studied in \cite{SussmannLiu}. All such examples are smooth curves.

When the end-point map is singular, it is not possible to deduce  the  Euler-Lagrange equations with their regularizing effect for minimizers constrained on a nonsmooth set. On the other hand, in the case of singular extremals the necessary conditions given  by  Optimal Control Theory (Pontryagin Maximum Principle) do not provide in general  any further regularity beyond the starting one, absolute continuity or Lipschitz continuity of the curve.

The most elementary kind of singularity for a Lipschitz curve
is of the corner-type: at a given point, the curve has a left and a right tangent that are linearly independent. In \cite{LeonardiMonti} and \cite{LeDonnneHakavouri1} it was proved that length minimizers cannot have singular points of this kind. These results have been  improved in \cite{MontiPigatiVittone}: at any point, the tangent cone to a length-minimizing curve contains at least one line (a half line, for extreme points), see also \cite{LeDonneHakavouri2}. The uniqueness of this tangent line for length minimizers is an open problem. Indeed, there exist other types of singularities related to the non-uniqueness of the tangent. In particular,   there exist spiral-like curves whose tangent cone at the center  contains 
many and in fact all tangent lines, see Example \ref{no2} below. 
These curves may appear as Goh extremals in Carnot groups, see \cite{LDLMV1}
and \cite[Section 5]{LLMV13}.   For these reasons, 
the results  of \cite{MontiPigatiVittone} are not  enough to prove the nonminimality of spiral-like extremals. Goal of this paper is to show that curves with this kind of singularity are not length-minimizing.

Let $M$ be an $n$-dimensional, $n\geq 3$,  analytic manifold endowed with a rank 2 analytic distribution $\mathcal D \subset TM$ that is bracket generating (H\"ormander condition).
An absolutely continuous curve   $\gamma\in  AC([0,1];M)$ is horizontal if $\dot\gamma \in \mathcal D(\gamma)$ almost everywhere. The length of $\gamma$ is defined fixing a metric tensor $g$ on $\mathcal D$ and letting
\begin{equation} \label{LUNG}
  L(\gamma) =\int_{[0,1]} g_\gamma (\dot\gamma,\dot\gamma )^{1/2} dt.
\end{equation}
The curve  $\gamma$ is a length-minimizer between its end-points 
if for any other horizontal curve $\bar \gamma\in AC([0,1];M)$ such that
$\bar \gamma (0) = \gamma(0)$  and 
$\bar \gamma (1) = \gamma(1)$ we have
$L(\gamma)\leq L(\bar \gamma)$.

Our notion  of horizontal spiral in a sub-Riemannian manifold of rank 2 is fixed in Definition \ref{spiral}. 
We will show that spirals are not length-minimizing when the horizontal distribution $\mathcal D$ satisfies the  following  commutativity condition.
 Fix two  vector fields $X_1,X_{2}\in\D$ that are linearly independent at some point $p\in M$.   For $k\in\N$ and for a multi-index $J=(j_1,\dots,j_k)$, with $ j_i\in \{1,2\} $,  we denote by
 $ X_J=[X_{j_1},[ \dots,[  X_{j_{k-1} }, X_{j_k}]\cdots ]]$ the iterated commutator associated with $J$. We define its length   as $\mathrm{len}(X_J)=k$.  Let   $\D_k(p)$ be the $\R$-linear span of $\{X_J(p)\,|\,\mathrm{len}(X_J)\leq k\}\subset T_p M$. 
In a neighborhood of the center of the spiral,  we will assume  the following  condition
 \begin{equation}
\label{12ML}
 [\D_i ,\D_j ]=\{0\}\quad 
 \textrm{for all $i,j\geq2$} .
\end{equation}

Our main result is the following

 \begin{theorem} 
 \label{MAIN}
 Let $(M,\mathcal D,g)$  be an analytic sub-Riemmanian manifold of rank 2
 satisfying \eqref{12ML}.
Any horizontal  spiral $\gamma \in AC( [0,1];M)$ is not length-minimizing near its center. 
\end{theorem}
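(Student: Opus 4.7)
My plan is to construct a genuinely shorter horizontal competitor $\bar\gamma$ by excising a sub-arc of $\gamma$ in a small neighborhood of its center $p$ and replacing it by a carefully chosen horizontal curve with the same endpoints. All computations are carried out in privileged (exponential) coordinates centered at $p$. The key structural payoff of \eqref{12ML} is that the nilpotent approximation $\widehat{\mathcal{D}}$ has a very rigid Lie algebra $\mathfrak{g} = \mathfrak{g}_1 \oplus \mathfrak{n}$ with $[\mathfrak{n},\mathfrak{n}] = 0$: all nonzero brackets involve one of $X_1,X_2$. Consequently, the vertical component of the endpoint of any horizontal curve can be expressed as an explicit iterated planar line integral in the horizontal projection $x(t) = (x_1(t),x_2(t))$---a Green-type formula whose leading contribution is a homogeneous polynomial (a quadratic moment at the first nonzero order) of the signed area enclosed by $x$.

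By the definition of a spiral, the polar angle of $\dot\gamma$ in the $(X_1,X_2)$ frame oscillates without bound as $t\to t_0$, where $\gamma(t_0) = p$. I would extract a sequence of loop intervals $[a_k,b_k]$ with $a_k,b_k\to t_0$ on which the horizontal projection completes essentially one full turn, and let $\bar r_k$ denote its typical radius. The sub-Riemannian length of $\gamma|_{[a_k,b_k]}$ is then bounded below by a constant times $\bar r_k$, while the endpoint displacement $\gamma(a_k)^{-1}\gamma(b_k)$ in the nilpotent approximation lies essentially in the vertical directions, with components of size at most $\bar r_k^\ell$ (with $\ell\geq 2$ determined by the depth of the corresponding vertical layer).

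The competitor replaces $\gamma|_{[a_k,b_k]}$ by a two-piece curve: a short horizontal segment between $x(a_k)$ and $x(b_k)$, together with small corrective horizontal pulses whose enclosed planar areas are tuned, using the explicit Green-type formula and \eqref{12ML}, to produce exactly the required vertical increments. A ball-box estimate for the nilpotent approximation bounds the length of this shortcut by $o(\bar r_k)$ as $k\to\infty$, so that on a single loop the competitor is strictly shorter than the excised arc.

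The central obstacle, flagged as ``delicate'' already in the abstract, is quantitative. One must (a) verify that the correcting pulses can really be chosen so as to match every vertical component simultaneously---an algebraic surjectivity question on finite-dimensional layers, solvable thanks to the polynomial structure provided by \eqref{12ML}; and (b) control the remainders coming from passing from $M$ to its nilpotent approximation, which is where the analyticity hypothesis is used to obtain convergent power-series bounds. Both the lower bound on the length of $\gamma|_{[a_k,b_k]}$ and the upper bound on the length of the shortcut must carry error terms uniformly smaller than the main saving of order $\bar r_k$. Once this is secured, choosing $k$ sufficiently large yields a horizontal curve with the same endpoints as $\gamma$ and strictly smaller length, contradicting minimality.
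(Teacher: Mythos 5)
There is a genuine gap in the step ``A ball-box estimate for the nilpotent approximation bounds the length of this shortcut by $o(\bar r_k)$.'' The ball-box theorem runs in the wrong direction here: it gives a \emph{lower} bound on the length of any horizontal curve correcting a vertical defect. If cutting the loop produces an error $\sim \bar r_k^{w_i}$ in a coordinate of weight $w_i$, then any corrective horizontal arc joining the two endpoints must have length $\gtrsim (\bar r_k^{w_i})^{1/w_i} = \bar r_k$, i.e.\ $\Theta(\bar r_k)$, not $o(\bar r_k)$. Worse, your premise that the ``main saving'' is of order $\bar r_k$ fails for slowly-rotating spirals: the length saved on $F_k=[t_{k+1},t_k]$ is $\int_{F_k}\big(\sqrt{1+t^2\dot\phi^2}-1\big)dt$, and when $t|\dot\phi(t)|\to 0$ (as for the double-logarithm spiral $\phi(t)=\log(-\log t)$ in Example~\ref{no2}) this is $o(\bar r_k)$. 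So a local correction placed at scale $\bar r_k$ costs at least as much as---and for slow spirals strictly more than---what the cut saves, and the argument collapses. A related structural issue is your reliance on the nilpotent approximation: as the paper emphasizes, the blow-up of the spiral can be a horizontal line, which \emph{is} length-minimizing, so no blow-up/Carnot-group reduction can detect the pathology.

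The paper's construction is designed precisely to escape this trap. The corrective ``devices'' are not placed inside $[a_k,b_k]$ but at \emph{fixed} angular locations $t_{h_3}>\dots>t_{h_n}$ that do not shrink as $k\to\infty$. Because the sensitivity of the $i$-th endpoint coordinate to a horizontal shift of size $\e_j$ at scale $t_{h_j}$ is proportional to $t_{h_j}^{w_i-1}$, which is a \emph{constant}, the linearized end-point system has a Jacobian bounded away from zero uniformly in $k$ (Theorem~\ref{stilx}), and the solution satisfies $|\e_j|\lesssim\int_{F_k}t^2|\dot\phi|\,dt$ (estimate \eqref{viola}). The gain from the cut, $\int_{F_k}\frac{t^2\dot\phi^2}{1+\sqrt{1+t^2\dot\phi^2}}dt$, dominates $C\int_{F_k}t^2|\dot\phi|\,dt$ for any fixed $C$ once $k$ is large, exactly because $|\dot\phi|\to\infty$; this is the content of \eqref{INES1}--\eqref{INES2}. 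That nonlocal placement of the corrections---using the structure of the spiral, which returns to the positive $x_1$-axis at every scale---is the idea missing from your proposal.
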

 
Differently from \cite{LeonardiMonti,LeDonnneHakavouri1,MontiPigatiVittone,LeDonneHakavouri2} and similarly to \cite{mo}, the proof of 
this theorem cannot be
reduced to the case of Carnot groups, the infinitesimal models of equiregular sub-Riemanian manifolds. This is because  the blow-up of the spiral could be a horizontal line, that is indeed length-minimizing. 

The nonminimality of spirals combined with the necessary conditions given by Pontryagin Maximum Principle is likely to give new regularity results on classes of sub-Riemannian manifolds, in the spirit of \cite{BarilariJeanPrandiSigalotti}.  We think, however, that the main interest of Theorem \ref{MAIN}
is in the deeper understanding that it provides on the loss of minimality caused  by   singularities.

The proof of Theorem \ref{MAIN} 
consists in constructing  a competing curve  shorter than the spiral.
The construction uses exponential coordinates of the second type and 
 our first step is a review of Hermes'  theorem on the structure of vector-fields in  such coordinates. In this situation, the commutativity condition \eqref{12ML}
 has a clear meaning explained in Theorem \ref{x1x2},   that may be of independent interest.
 Even though our definition of
 \enquote{horizontal spiral} is   given in coordinates of the second type, see Definition \ref{spiral},   it is actually coordinates-independent, see Remark \ref{FREE}.

In Section \ref{sez2}, we start the construction of the competing curve. Here we use the specific structure of a spiral. The gain of length is obtained by cutting one spire near the center. The adjustment of the end-point  will be obtained modifying the spiral in a certain number of locations adding \enquote{devices} depending on a set of  parameters. The horizontal coordinates of the spiral are a planar curve intersecting the positive $x_1$-axis infinitely many times. The possibility of adding devices at such locations arbitrarily close to the origin will be a crucial fact.

In Section \ref{four}, we develop an integral calculus on monomials that is used to estimate the effect of cut and devices on the end-point of the modified spiral.  Then, in Section \ref{SYS}, we fix the parameters of the devices in such a way that the end-point of the modified curve coincides with the end-point of the spiral. This is done in  Theorem \ref{stilx} by a linearization argument. Sections \ref{sez2}--\ref{SYS} contain the technical core of the paper.

We use the specific structure of the length-functional in  Section \ref{seven}, where we prove that the modified curve is shorter than the spiral, provided that the cut is sufficiently close to the origin. This will be the conclusion of the proof of Theorem \ref{MAIN}.

We briefly comment  on the assumptions made in Theorem \ref{MAIN}.
The analyticity of $M$ and $\mathcal D$ is needed only in Section \ref{due}. 
In the analytic case, it is known that length-minimizers
are smooth in an open and dense set, see \cite{Sus14}. See also \cite{FR} for a $C^1$-regularity result when $M$ is an analytic manifold of dimension $3$.

The assumption that the distribution $\mathcal D$ has rank 2 is natural when considering horizontal spirals. When the rank is higher there is room for more complicated singularities in the horizontal coordinates, raising challenging questions about the regularity problem.

Dropping the commutativity assumption \eqref{12ML} is a major technical problem: getting sharp estimates from below for the  effect produced by cut and devices on the end-point seems extremely difficult when the coefficients of the horizontal vector fields depend also on nonhorizontal coordinates, see Remark \ref{TECH}.

\section{Exponential coordinates at the center of the spiral}

\label{due}

 In this section, we introduce in $M$ exponential coordinates 
 of the second type centered at a point $p \in M$, that will be  the center of the spiral.

   Let $X_1, X_2\in \mathcal D$ be linearly independent at $p$.
 Since the distribution  $\D$ is bracket-generating  we can find vector-fields $X_3,\ldots, X_n$, with $n = \mathrm{dim}(M)$, such that
 each $X_i$ is an iterated commutator of $X_1,X_2$ with length $w_i =\mathrm{len}(X_i)$, $i=3,\ldots,n$, and such that $X_1,\ldots,X_n$ at $p$ are  a basis for $T_pM$.
By continuity, there exists an open neighborhood $U$ of $p$  such that $X_1(q),\dots,X_n(q)$ form  a basis for $T_qM$, for any $q\in U$. We call $X_1,\ldots,X_n$ a stratified basis of vector-fields in $M$.

 Let   $\phi\in C^\oo(U;\R^n)$ be a chart such that $\phi(p)=0$ and $\phi(U)=V$, with $V\subset \R^n $ open neighborhood of $0\in\R^n$.   Then
  $\widetilde X_1=\phi_*X_1,\ldots, \widetilde X_n=\phi_*X_n$  is  a system of point-wise linearly independent vector fields in   $V\subset\R^n$. Since our problem has a local nature,
we can  without loss of generality assume  that $M=V= \R^n$ and $p=0$.

After these identifications, we have a stratified basis of vector-fields  $X_1,\dots,X_n$ in $\Rn$.
We  say that  $x=(x_1,\.,x_n)\in\R^n$ are exponential coordinates of the second type
associated with the vector fields $X_1,\.,X_n$ if we have  
 \begin{equation} \label{23}
  x = \Phi_{x_1} ^{ X_1} \circ\.\circ\Phi_{ x_n}^{X_n} (0),\quad x\in \R^n.
 \end{equation}
We  are using the notation   $\Phi_s^{X}=\exp(sX)$, $s\in\R$, to denote the flow of a vector-field $X$.
 From now on, we assume that $X_1,\ldots,X_n$ are complete and induce  exponential coordinates of the second type.

 We define
the homogeneous degree of the coordinate $x_i$ of $\Rn$ as  $
    w_i=\mathrm{len}(X_i)$.
    We introduce the $1$-parameter group of dilations
    $\de_\la:\Rn\to\Rn$,  $\la>0$,
    \[
    \de_\la(x)=(\la^{w_1}x_1,\dots,\la^{w_n}x_n),\qquad x\in\R^n,
   \]
   and we say   that
    a function $f:\Rn\to\R$ is  $\de$-homogeneous   of degree $w\in\N$  if $
    f(\de_\la(x))=\la^wf(x)$ for all $x\in \R^n$ and $\la>0$.
 An example of $\de$-homogeneous function of degree $1$ is the pseudo-norm
 \begin{equation}
  \|x\| =\sum_{j=1}^n{|x_i|^{1/w_i}},\quad x\in \Rn.
 \end{equation}

The following theorem is proved in \cite{hermes} in the case of general rank.

 \begin{theorem}
 \label{giallo}
  Let $\D = \mathrm{span}\{X_1, X_2\}\subset TM $ be an analytic distribution of rank 2. 
  In exponential coordinates
  of the second type around a point $p\in M$ identified with $0\in\R^n$, the  vector fields $X_1$ and $X_2$ have the form
  \begin{equation} \label{X1X2}
   \begin{split}
    &X_1(x)=\partial_{x_1}, \\
    &X_2(x)=\partial_{x_2}+\sum_{j=3}^n{a_{j}(x)\partial_{x_j}},
   \end{split}
  \end{equation}
 for $x\in U$, where $U$ is a neighborhood of $0$. The analytic  functions    $a_{j}\in C^\oo(U)$, $j=3,\ldots,n$, have the structure    $a_{j}=p_{j}+r_{j}$, where:
  \begin{itemize}
   \item[(i)] $p_{j}$ are $\de$-homogeneous polynomials  
   of degree $w_j-1$
     such that $p_j(0,x_2,\dots,x_n)=0$; 
         \item[(ii)] $r_{j}\in C^\oo(U)$ are analytic functions such that,
   for some constants $C_1,C_2>0$ and for $x\in U$,
   \begin{equation}
   \label{C1C2}
     |r_{j}(x)|\leq C_1\|x\|^{w_j} \quad \textrm{and}\quad  |\d_{x_i } r_{j}(x)|\leq C_2\|x\|^{w_j-w_i}.
     \end{equation}
      \end{itemize}
 \end{theorem}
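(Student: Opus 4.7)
My plan is to exploit the defining equation $F(x)=\Phi_{x_1}^{X_1}\circ\cdots\circ\Phi_{x_n}^{X_n}(0)$ of the chart, compute $X_1,X_2$ in the new coordinates via direct differentiation and the pushforward identity for flows, and then use Taylor expansion together with the stratified basis structure to extract the homogeneous polynomial and analytic remainder.

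First I would differentiate $F$ in the first coordinate. The equality $\partial F/\partial x_1(x)=X_1(F(x))$ identifies $X_1=\partial_{x_1}$, so that $\Phi_t^{X_1}$ is translation $(x_1,\ldots,x_n)\mapsto(x_1+t,x_2,\ldots,x_n)$. Restricting $F$ to the slice $\{x_1=0\}$ (where the outermost flow is trivial) and differentiating in $x_2$ yields $X_2(0,x_2,\ldots,x_n)=\partial_{x_2}$; in other words $X_2$ coincides with $\partial_{x_2}$ on the hyperplane $\Sigma=\{x_1=0\}$. Writing $X_2=\sum_i a_i(x)\partial_{x_i}$ and using that $X_1=\partial_{x_1}$ acts on the components by plain $x_1$-differentiation, the $k$-th Taylor coefficient in $x_1$ of $a_i$ at a point of $\Sigma$ equals the $\partial_{x_i}$-component of $(\mathrm{ad}_{X_1}^k X_2)(0,x_2,\ldots,x_n)$. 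Expanding each such commutator at $\Sigma$ in the stratified basis $X_1(y_2),\ldots,X_n(y_2)$, with $y_2=(0,x_2,\ldots,x_n)$, and using the Lie algebra structure together with the vanishing of the $\partial_{x_1}$-coefficient of $X_2$ on $\Sigma$ already established, one propagates the vanishing to all orders in $x_1$. Therefore $a_1\equiv 0$ and $a_2\equiv 1$, giving the form $X_2=\partial_{x_2}+\sum_{j\geq 3}a_j(x)\partial_{x_j}$.

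For the decomposition $a_j=p_j+r_j$ of the remaining coefficients, I would Taylor-expand $a_j$ at the origin and group monomials by $\delta$-weight $\sum_i\alpha_iw_i$. The stratified basis hypothesis implies a homogeneity bound: the $\partial_{x_j}$-component of an iterated bracket of length $\ell$, Taylor-expanded at the origin, contains only monomials of $\delta$-weight $\geq w_j-\ell$. Combining this with the representation of $a_j$ as a sum over $k$ of $(\mathrm{ad}_{X_1}^k X_2)(y_2)$ terms weighted by $x_1^k/k!$, the monomials of total $\delta$-weight exactly $w_j-1$ assemble into the $\delta$-homogeneous polynomial $p_j$, and the strictly higher-weight tail forms the analytic remainder $r_j$. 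Since for $j\geq 3$ the only $x_1$-free contribution (the $k=0$ term $X_2(y_2)$) produces only a $\partial_{x_2}$ summand, every surviving monomial in $p_j$ must contain a factor of $x_1$, yielding $p_j(0,x_2,\ldots,x_n)=0$. The bounds $|r_j|\leq C_1\|x\|^{w_j}$ and $|\partial_{x_i}r_j|\leq C_2\|x\|^{w_j-w_i}$ of \eqref{C1C2} then follow from Cauchy-type estimates on the convergent analytic Taylor tail together with the $\delta$-weight of each omitted monomial.

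The main obstacle is precisely the weight estimate on iterated commutators. Proving that the $\partial_{x_j}$-component of $\mathrm{ad}_{X_1}^k X_2$ admits a Taylor expansion at the origin consisting only of monomials of $\delta$-weight $\geq w_j-(k+1)$ is the algebraic heart of the theorem and rests on the compatibility of the stratified basis with the $\delta_\lambda$-grading; it is proved by induction on commutator length, using that forming a bracket can decrease the $\delta$-weight of coefficients by at most one. With this weight bookkeeping in place, the polynomial/remainder splitting and the estimates on $r_j$ follow routinely from the convergent analytic Taylor expansion.
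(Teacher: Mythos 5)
Your strategy for part (ii) — Taylor-expand the remainder, observe that every monomial has $\delta$-weight at least $w_j$, and sum the absolutely convergent series using $|x^\alpha|\leq\|x\|^{\sum\alpha_iw_i}$ — is essentially what the paper does, and it is correct. The difference is in part (i). There the paper simply invokes Hermes for the full structural statement (the triangular form of $X_2$, the decomposition $a_j=p_j+r_j$ with $p_j$ $\delta$-homogeneous of degree $w_j-1$ and $p_j(0,x_2,\dots,x_n)=0$, and the fact that the Taylor series of $r_j$ starts at $\delta$-weight $w_j$) and then adds only the derivation of the quantitative bounds from the weighted Taylor series. You instead attempt to derive (i) from scratch via the Taylor expansion of $X_2$ in $x_1$, identifying the $k$-th Taylor coefficient with the $\partial_{x_j}$-component of $\mathrm{ad}_{X_1}^k X_2$ on $\{x_1=0\}$.

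The difficulty is that your key lemma — the $\partial_{x_j}$-component of any iterated bracket of length $\ell$ has a Taylor expansion at $0$ containing only monomials of $\delta$-weight $\geq w_j-\ell$ — is not established, and the induction you propose is circular: the base case $\ell=1$ for $X_2$ says precisely that $a_j$ has terms only of weight $\geq w_j-1$, which is the conclusion of the theorem. Likewise, the statement that ``forming a bracket can decrease the $\delta$-weight of coefficients by at most one'' is equivalent to knowing that the vector fields have $\delta$-degree $\geq -1$, which again is the content of (i). Your argument that $a_1\equiv0$ and $a_2\equiv1$ ``propagates to all orders in $x_1$'' is similarly an assertion, not a proof: for $x_1\neq0$ the fields $X_1,\dots,X_n$ are still a frame but an iterated bracket such as $\mathrm{ad}_{X_1}^kX_2$ may a priori have nonzero $X_1$- and $X_2$-components at points of $\Sigma$ away from the origin, so the claimed vanishing of the $\partial_{x_1}$- and $\partial_{x_2}$-coefficients to all orders in $x_1$ needs a genuine argument. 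Finally, expanding $\mathrm{ad}_{X_1}^kX_2$ in $x_1$ leaves the Taylor expansion in $x_2,\dots,x_n$ untreated, and here $\partial_{x_2}$ no longer coincides with $X_2$ off the slice, so the commutator identification does not iterate cleanly in the remaining variables. The path you outline can in fact be made to work (it is close to the privileged-coordinates/nilpotent-approximation machinery underlying Hermes' theorem), but as written the central weight estimate is asserted rather than proved and the inductive setup begs the question.
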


\begin{proof}  
The proof that $a_{j}=p_{j}+r_{j}$ where $p_{j}$ are polynomials  as in (i)  
and the remainders $r_{j}$ are real-analytic functions such that  $r_{j}(0)=0$ can be found in \cite{hermes}.
The proof of (ii) is also implicitly contained in \cite{hermes}. Here, we add some details.
The   Taylor series of $r_j$ has the  form
 \[
  r_j(x)=\sum_{\l =w_j}^\infty r_{j\l}(x)=
  \sum_{\l= w_j}^\infty \sum_{\a\in\A_\l}c_{\a \l}  x^\a,
 \]
 where $\A_\l = \{\alpha\in \N^n:  \alpha_1 w_1 + \ldots +\alpha_n w_n =\l\}$, 
 $x^\a = x_1^{\alpha_1}\cdots x_n^{\alpha_n}$ and 
  $c_{\a\l}\in\R$ are constants. Here and in the following, $\N =\{0,1,2,\ldots\}$.  The series converges absolutely in a small homogeneous cube $Q_\delta =\{ x\in \Rn: \| x\| \leq \delta\}$ for some $\delta>0$, and in particular
 \[
  \sum_{\l= w_j}^\infty \delta ^\l\sum_{\a\in\A_\l}|c_{\a\l} |<\oo.
 \]
 Using the inequality  $|x^\a|\leq\|x\|^\l$ for $\a\in\A_\l$,  for $x\in Q_\delta$ we get 
  \[
  |r_j(x)|\leq C_1 \|x\|^{w_j}, \quad \textrm{with }C_1 =  \sum_{\l=w_j}^\infty \delta ^{\l-w_j}\sum_{\a\in\A_\l}|c_\a|<\infty .
 \]
 
  The estimate for the derivatives of $r_j$ is analogous. Indeed, we have 
   \[
  \d_{x_i}r_j(x)=\sum_{\l =w_j} ^\infty 
  \sum_{\a\in\A_\l}  \alpha_i c_{\a\l} x^{\a-\ee_i},
 \]
 where $\a-\ee_i \in \A_{\l-w_i} $ whenever $\a\in \A_\l$. Thus the leading term in the series has homogeneous degree $w_j-w_i$ and repeating the argument above we get the estimate $|\d_{x_i } r_{j}(x)|\leq C_2\|x\|^{w_j-w_i}$ for $x\in Q_\delta$.

 \end{proof}

When the distribution $\mathcal D$ satisfies the commutativity assumption \eqref{12ML}
the coefficients $a_j$ appearing in the vector-field $X_2$ in \eqref{X1X2}
enjoy additional properties.

\begin{theorem}
\label{x1x2}
 If $\mathcal D\subset T M$ is an analytic distribution of rank 2 satisfying \eqref{12ML} then the functions $a_3,\ldots,a_n$ of Theorem \ref{giallo} depend only on the variables $x_1$ and $x_2$.
\end{theorem}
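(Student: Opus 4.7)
My plan is to compute the coordinate expression of $X_2$ directly by following an integral curve of $X_2$ through the chart map $\psi(x)=\Phi_{x_1}^{X_1}\circ\cdots\circ\Phi_{x_n}^{X_n}(0)$ and showing that every correction introduced by non-commutativity stays inside an abelian ideal whose coefficients depend only on $x_1,x_2$. The structural point is that the commutativity condition \eqref{12ML} forces the $\R$-linear span $\mathfrak{h}$ of all iterated brackets of $X_1,X_2$ of length $\geq 2$ to be an abelian Lie subalgebra. Since $\mathrm{ad}_{X_k}$ maps a bracket of length $l$ into one of length $l+1$, $\mathfrak{h}$ is also closed under $\mathrm{ad}_{X_1}$ and $\mathrm{ad}_{X_2}$, hence an ideal. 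In particular $X_3,\ldots,X_n\in\mathfrak{h}$ commute pairwise.

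First I collapse the tail of the chart using pairwise commutativity: $\Phi_{x_3}^{X_3}\circ\cdots\circ\Phi_{x_n}^{X_n}(0)=\exp(Y)(0)$ with $Y=\sum_{j\geq 3}x_jX_j\in\mathfrak{h}$. I then consider $\gamma(s)=\Phi_s^{X_2}(\psi(x))$ and conjugate $\Phi_s^{X_2}$ past $\Phi_{x_1}^{X_1}$ to obtain
\[
\Phi_s^{X_2}\circ\Phi_{x_1}^{X_1}=\Phi_{x_1}^{X_1}\circ\Phi_s^{X_2+W(x_1)},\qquad W(x_1)=e^{x_1\mathrm{ad}_{X_1}}(X_2)-X_2\in\mathfrak{h}.
\]
The perturbed flow is handled by writing $\Phi_s^{X_2+W}=\Phi_s^{X_2}\circ\beta_s$; a variation-of-constants computation gives $\dot\beta_s=\widetilde W_s(\beta_s)$ with $\widetilde W_s=(\Phi_{-s}^{X_2})_*W=e^{s\mathrm{ad}_{X_2}}W\in\mathfrak{h}$. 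Because $\mathfrak{h}$ is abelian, the vector fields $\widetilde W_s$ commute pairwise for different $s$, so $\beta_s=\exp(V(s,x_1))$ with $V(s,x_1)=\int_0^s e^{t\mathrm{ad}_{X_2}}W(x_1)\,dt\in\mathfrak{h}$.

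A final conjugation moves $\exp(V)$ past $\Phi_{x_2}^{X_2}$, replacing $V$ by $V'(s,x_1,x_2)=e^{x_2\mathrm{ad}_{X_2}}V\in\mathfrak{h}$, and $[V',Y]=0$ yields $\exp(V')\circ\exp(Y)(0)=\exp(V'+Y)(0)$, which by commutativity of $X_3,\ldots,X_n$ equals $\Psi_3(z_3(s),\ldots,z_n(s))$, where $z_j(s)=x_j+v'_j(s,x_1,x_2)$ are the components of $V'+Y$ in the basis $X_3,\ldots,X_n$. Therefore $\gamma(s)=\psi(x_1,\,x_2+s,\,z_3(s),\ldots,z_n(s))$, and differentiating at $s=0$ gives
\[
\hat X_2(x)=\partial_{x_2}+\sum_{j\geq 3}c_j(x_1,x_2)\,\partial_{x_j},
\]
where $c_j(x_1,x_2)$ is the $X_j$-coefficient of $e^{x_2\mathrm{ad}_{X_2}}e^{x_1\mathrm{ad}_{X_1}}(X_2)-X_2$. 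Comparing with the expression for $X_2$ given by Theorem \ref{giallo} yields $a_j(x)=c_j(x_1,x_2)$, which is the claim.

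The main technical point is justifying these formal Lie-algebraic manipulations analytically: convergence of the series $e^{t\mathrm{ad}_X}$ and the identity $(\Phi_{-t}^X)_*Y=e^{t\mathrm{ad}_X}Y$ hold in a sufficiently small neighborhood of $0$ by analyticity of $X_1,X_2$, while the abelian structure of $\mathfrak{h}$ makes the time-dependent flow of $\widetilde W_s$ integrate exactly to $\exp\bigl(\int_0^s\widetilde W_t\,dt\bigr)$. The whole argument hinges on the observation that every correction generated along the way stays inside the abelian $\mathfrak{h}$, and therefore cannot introduce dependence on $x_3,\ldots,x_n$ into $a_j$.
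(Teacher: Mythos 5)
Your argument is correct, but it takes a genuinely different route from the paper's. The paper introduces the diffeomorphism $\Theta_{t,x_1,x_2}=\Phi_{-(x_2+t)}^{X_2}\Phi_{-x_1}^{X_1}\Phi_t^{X_2}\Phi_{x_1}^{X_1}\Phi_{x_2}^{X_2}$, proves only an \emph{approximate} commutation $|\Theta_{t,x_1,x_2}\Phi_s^{X_j}-\Phi_s^{X_j}\Theta_{t,x_1,x_2}|\leq Ct^2$ (Lemma \ref{leo}, an analyticity argument showing a certain Lie derivative vanishes to all orders), slides $\Theta$ past the chart tail with an $O(t^2)$ error, and then reads off $a_j$ from the $t$-derivative at $t=0$. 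What you do instead is compute $\Theta$ \emph{exactly}: your conjugation steps show $\Phi_t^{X_2}\Phi_{x_1}^{X_1}=\Phi_{x_1}^{X_1}\Phi_t^{X_2}\exp(V(t,x_1))$ and hence $\Theta_{t,x_1,x_2}=\exp(V'(t,x_1,x_2))$ with $V'\in\mathfrak h$, after which exact (not merely second-order) commutation with $\exp(Y)$ is automatic from $\mathfrak h$ being abelian. This replaces the entire approximation machinery of Lemma~\ref{leo} with closed Lie-algebraic identities, and in addition yields the explicit formula $a_j(x_1,x_2)=\bigl(e^{x_2\ad_{X_2}}e^{x_1\ad_{X_1}}X_2-X_2\bigr)_j$ at the origin, which the paper does not record. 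The two proofs rely on the same structural input (the abelian ideal generated by brackets of length $\geq2$), but yours makes it exact and constructive rather than asymptotic.

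One step in your write-up is phrased loosely and is worth tightening: you say $z_j(s)=x_j+v'_j(s,x_1,x_2)$ are ``the components of $V'+Y$ in the basis $X_3,\dots,X_n$''. As a statement about vector fields this is not quite right, since expanding $V'$ in the frame $X_3,\dots,X_n$ produces point-dependent coefficients (showing those coefficients depend only on $x_1,x_2$ is essentially the content of the theorem, so this phrasing risks circularity). What is true, and what your argument actually needs, is the weaker statement about \emph{points}: since every iterated bracket of length $\geq2$ has no $\partial_{x_1},\partial_{x_2}$ component in these coordinates, and $V'$ depends only on the parameters $s,x_1,x_2$, the point $\exp(V')(0)$ has exponential coordinates $(0,0,v'_3,\dots,v'_n)$ with $v'_j=v'_j(s,x_1,x_2)$. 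Then $\exp(V')(0)=\exp\bigl(\sum_j v'_jX_j\bigr)(0)$ as points, and commutativity of $Y$ with $\sum_j v'_jX_j$ (both constant-coefficient combinations of the commuting $X_3,\dots,X_n$) gives $\exp(V'+Y)(0)=\exp(Y)\bigl(\exp(V')(0)\bigr)=\exp\bigl(\sum_j(x_j+v'_j)X_j\bigr)(0)$, which has the coordinates you claim. With this reformulation the step is rigorous and the rest of your derivation follows.
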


\begin{proof}

Let $\Gamma:\R\times\R^n\to\Rn$ be the map $ \Gamma(t,x)
   =\Phi_{t}^{X_2}(x), 
$
 where $ x\in\Rn$ and $   t\in\R$. Here, we   are using the exponential coordinates \eqref{23}.  
 In the following we omit the composition sign $\circ$. 
Defining 
 $\Theta:\R^3\times\R^n\to\R^n$ as the map   
 $
 \Theta_{t,x_1,x_2}(p)=\Phi_{-(x_2+t)}^{X_2}\Phi_{-x_1}^{X_1}\Phi_{t}^{X_2}\Phi_{x_1}^{X_1}\Phi_{x_2}^{X_2}(p),
 $ 
 we  have  
\[
 \G(t,x)=\Phi_{x_1}^{X_1}\Phi_{x_2+t}^{X_2}\Theta_{t,x_1,x_2}\Phi_{x_3}^{X_3}\.\Phi_{x_n}^{X_n}(0).
\]
We claim that there exists a   $C>0$ independent of $t$ such that, for $t\to0$,
  \begin{equation}
\label{commuta}
|   \Theta_{t,x_1,x_2}\Phi_{s}^{X_j}-\Phi_{s}^{X_j}\Theta_{t,x_1,x_2}| \leq  C t^2.
 \end{equation} 
We will prove   claim \eqref{commuta} in Lemma \ref{leo} below. 
From \eqref{commuta} it follows that  there exist  mappings  $R_{t}\in C^\infty(\Rn,\Rn)$ such that 
\begin{equation} \label{commuta2}
 \G(t,x)=\Phi_{x_1}^{X_1}\Phi_{x_2+t}^{X_2}\Phi_{x_3}^{X_3}\.\Phi_{x_n}^{X_n}\Theta_{t,x_1,x_2}(0)+R_t(x),
\end{equation}
and such that $|R_t|\leq C t^2$ for $t\to0$.

By the structure 
\eqref{X1X2} of the vector fields $X_1$ and $X_2$
and since
$\Theta_{t,x_1,x_2}$ is the composition of $C^\oo$ maps,  
there exist $C^\oo$ functions $f_j=f_j(t,x_1,x_2)$ such that
\begin{equation}
\label{TETA}
 \Theta_{t,x_1,x_2}(0)=\big(
 0,0,f_3(t,x_1,x_2),\.,f_n(t,x_1,x_2)\big)
 =\exp\Big(\sum_{j=3}^nf_j(t,x_1x_2)X_j\Big)(0).
\end{equation}
By \eqref{12ML}, from  \eqref{commuta2} and \eqref{TETA} we obtain 
\begin{align*}
 \Gamma(t,x)&=  \Phi_{x_1}^{X_1} \Phi_{x_2+t}^{X_2} \exp\Big(\sum_{i=3}^n(x_j+f_j(t,x_1,x_2))X_j\Big)(0) +R_t(x)  
 \\
 &=\big(
 x_1,x_2+t,x_3+f_3(t,x_1,x_2),\.,x_n+f_n(t,x_1,x_2)\big) +R_t(x) ,
\end{align*}
 and we conclude that     
 \[
  X_2(x) =
  \frac{d}{dt}\Gamma(x,t)\Big|_{t=0}=\d_2+\sum_{j=3}^n\frac{d}{dt}f_j(t,x_1,x_2)\Big|_{t=0}\d_j.
 \]
 Thus  the coefficients  $a_j(x_1,x_2)=\frac{d}{dt}f_j (t,x_1,x_2)|_{t=0}$, $j=3,\ldots,n$, depend only on the first two variables, completing the proof.
\end{proof}

In the  following lemma, we   prove our claim \eqref{commuta}.

\begin{lemma}
\label{leo}
 Let $\mathcal D \subset T M$ be an analytic distribution satisfying \eqref{12ML}.
 Then for  any $j=3,\ldots,n$  the claim in \eqref{commuta} holds.
\end{lemma}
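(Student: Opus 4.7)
The proof reduces to a first-order computation in $t$. Canceling the pairs $\Phi_{x_1}^{X_1}\Phi_{-x_1}^{X_1}$ and $\Phi_{-x_2}^{X_2}\Phi_{x_2}^{X_2}$ gives $\Theta_{0,x_1,x_2}=\id$, so
$$\Theta_{t,x_1,x_2}(p)=p+t\,Y(p)+O(t^2),\qquad Y(p):=\frac{d}{dt}\Theta_{t,x_1,x_2}(p)\bigg|_{t=0},$$
where $Y$ is, for each fixed $(x_1,x_2)$, a vector field on $M$. Differentiating the composition that defines $\Theta_t$ (the $t$ appearing in both $\Phi_{-(x_2+t)}^{X_2}$ and $\Phi_t^{X_2}$) yields $Y=-X_2+(\Phi_{-x_2}^{X_2})_*(\Phi_{-x_1}^{X_1})_*X_2$. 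Using the classical identity $(\Phi_s^X)_*Z=e^{s\,\ad_X}Z$, which in the analytic setting is an absolutely convergent series for $|s|$ small, we can write
$$Y=-X_2+e^{-x_2\,\ad_{X_2}}\,e^{-x_1\,\ad_{X_1}}\,X_2.$$

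Since $\ad_{X_2}X_2=0$, the only length-$1$ contribution in the expansion of $e^{-x_2\,\ad_{X_2}}e^{-x_1\,\ad_{X_1}}X_2$ is $X_2$ itself, and it is exactly cancelled by the $-X_2$. Every remaining term is a real scalar depending analytically on $(x_1,x_2)$ times an iterated commutator $X_J$ of $X_1,X_2$ with $\mathrm{len}(X_J)\geq 2$. Consequently, for each fixed $(x_1,x_2)$ in a neighborhood of the origin, $Y$ is a convergent $\R$-linear combination of vector fields $X_J$ of length at least $2$.

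For any $j\geq 3$ one has $\mathrm{len}(X_j)=w_j\geq 2$, so $X_j\in\D_{w_j}$. The commutativity hypothesis \eqref{12ML} then gives $[X_j,X_J]=0$ for every $X_J$ with $\mathrm{len}(X_J)\geq 2$, and by $\R$-linearity of the Lie bracket one obtains
$$[X_j,Y]=0$$
as vector fields on $M$. Equivalently, $Y$ is invariant under the flow of $X_j$, i.e., $D\Phi_s^{X_j}(p)\,Y(p)=Y(\Phi_s^{X_j}(p))$ for every $p$ and every $s$.

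Taylor expanding each composition in $t$ we finally get
$$\Theta_{t,x_1,x_2}\Phi_s^{X_j}(p)-\Phi_s^{X_j}\Theta_{t,x_1,x_2}(p)=t\bigl[Y(\Phi_s^{X_j}(p))-D\Phi_s^{X_j}(p)\,Y(p)\bigr]+O(t^2),$$
and the bracketed expression vanishes by the previous step. The $O(t^2)$ remainder is locally uniform in $(p,s,x_1,x_2)$ by smoothness of the flows, which is exactly the content of \eqref{commuta}. The main technical point is the identification of $Y$ as a convergent combination of iterated commutators of length $\geq 2$: analyticity is what makes the $\ad$-series converge, and it is precisely this structure that lets the hypothesis \eqref{12ML} annihilate the first-order obstruction term by term.
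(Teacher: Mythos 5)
Your proof is correct and follows essentially the same route as the paper: reduce to the first-order term in $t$, identify the vector field $Y=\dot\Theta_{0,x_1,x_2}=-X_2+(\Phi_{-x_2}^{X_2})_*(\Phi_{-x_1}^{X_1})_*X_2$, recognise it (after the $X_2$ cancellation) as a convergent series of iterated commutators of length at least $2$, and kill the bracket $[X_j,Y]$ using \eqref{12ML}. The only presentational difference is that the paper expands just the $X_1$-pushforward into an $\ad$-series via Hermes and handles the $X_2$-pushforward by Taylor-expanding the vector-valued function $h(x_2,s)$ at $x_2=0$, which is exactly the device that makes the term-by-term vanishing rigorous; you expand both pushforwards at once as $e^{\ad}$ series and appeal to ``$\R$-linearity of the bracket,'' which is the same computation but should be accompanied by a remark that the Lie bracket may be interchanged with the infinite $e^{\ad}$ sum (legitimate here because, in the analytic category and for $|x_1|,|x_2|$ small, the series converges locally uniformly together with its derivatives).
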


\begin{proof}  Let $X=X_j$ for any $j=3,\ldots,n$ and    
  define the map  $T_{t,x_1,x_2;s} ^X =\Theta_{t,x_1,x_2}\Phi_s^X-\Phi_s^X\Theta_{t,x_1,x_2}$. For $t=0$ the map $\Theta_{0,x_1,x_2}$ is the identity and thus $T_{0,x_1,x_2;s} ^X=0$. So, claim \eqref{commuta}
 follows as soon as we show that  
 \[
  \dot T_{0,x_1,x_2;s} ^X= \frac{\d}{\d t}\Big|_{t=0}T_{t,x_1,x_2;s} ^{X} =0,
 \]
 for any $s\in\R$ and for all $x_1,x_2\in\R$.

 We first 
  compute the derivative of $\Theta_{t,x_1,x_2}$ with respect to $t$.
 Letting $\Psi_{t,x_1}=\Phi_{-x_1}^{X_1}\Phi_{t}^{X_2}\Phi_{x_1}^{X_1}$ we have  
 $
  \Theta_{t,x_1,x_2}=\Phi_{-(x_2+t)}^{X_2}\Psi_{t,x_1}\Phi_{x_2}^{X_2},
 $
 and, thanks to \cite[Appendix A]{hermes}, the derivative of $\Psi_{t,x_1}$ at $t=0$ is 
 \[
  \dot\Psi_{0,x_1}=\sum_{\nu=0}^\oo c_{\nu,x_1} W_\nu ,  \]
 where $W_\nu = [X_1,[ \cdots,[X_1,X_2] \cdots]]$ with $X_1$ appearing $\nu$ times and $c_{\nu,x_1}=(-1)^\nu x_1^\nu/\nu!$.
 In particular, we have $c_{0,x_1}=1$. Then the derivative of $\Theta_{t,x_1,x_2}$ at $t=0$ is
 \begin{align*}
  \dot\Theta_{0,x_1,x_2}&=-X_2+ d \Phi_{-x_2}^{X_2} \big(  \dot\Psi_{0,x_1} (\Phi_{x_2}^{X_2})\big)
  \\
  &
  =-X_2+\sum_{\nu=0}^\oo c_{\nu,x_1} d \Phi_{-x_2}^{X_2}\big(
  W_\nu ( \Phi_{x_2}^{X_2})\big)
  \\
  &=\sum_{\nu=1}^\oo c_{\nu,x_1} d \Phi_{-x_2}^{X_2}\big(
  W_\nu ( \Phi_{x_2}^{X_2})\big),
 \end{align*}
 because   the term in the sum with $\nu=0$ is  $d \Phi_{-x_2} ^{X_2} \big(X_2(\Phi_{x_2}^{X_2}) \big)= X_2$.
  Inserting this formula for $
  \dot\Theta_{0,x_1,x_2}$ into  
 \begin{equation}
 \label{titti}
  \dot T_{0,x_1,x_2;s} ^X =\dot\Theta_{0,x_1,x_2} (\Phi_s^X)- d \Phi_s^X( \dot \Theta_{0,x_1,x_2}),
 \end{equation} 
   we obtain
 \begin{align*}
  \dot T_{0,x_1,x_2; s} ^{X}=&\sum_{\nu=1}^\oo c_{\nu,x_1}
  d \Phi_{-x_2}^{X_2} \big ( W_\nu (\Phi_{x_2}^{X_2}\Phi_s^X ) \big)
  -d \Phi_s^{X}  \sum_{\nu=1}^\oo c_{\nu,x_1}
  d \Phi_{-x_2}^{X_2}\big( W_\nu\big(\Phi_{x_2}^{X_2})\big)
  \\
  =& d \Phi_s^X 
  \sum_{\nu=1}^\oo c_{\nu,x_1}\Big(
  d \Phi_{-s}^X  d \Phi_{-x_2}^{X_2}
  \big(    W_\nu  (\Phi_{x_2}^{X_2}\Phi_s^X) \big)
  -  d \Phi_{-x_2}^{X_2} \big(
   W_\nu(\Phi_{x_2}^{X_2})\big)\Big).
 \end{align*}
In order  to prove that 
$  \dot T_{0,x_1,x_2; s} ^{X}$ vanishes for all $x_1,x_2$ and $s$, we have to show that
\begin{equation}
\label{CLAM}
  g(x_2,s):=  d \Phi_{-s}^X  d \Phi_{-x_2}^{X_2}
  \big(    W_\nu  (\Phi_{x_2}^{X_2}\Phi_s^X) \big)
  -  d \Phi_{-x_2}^{X_2} \big(
   W_\nu(\Phi_{x_2}^{X_2})\big)=0,
\end{equation}
for any $\nu\geq1$ and for any $x_2$ and $s$.
From 
$\Phi_0^{X}=\id$
it follows that  $ g(x_2,0)=0$.
Then, our claim \eqref{CLAM} is implied by  
\begin{equation}
 \label{dsh}
h(x_2,s) :=  \frac{\d}{\d s} g(x_2,s)=0.
\end{equation}
Actually, 
this is a Lie derivative and, namely,
 \begin{align*}
   h(x_2,s)&
  =- d \Phi_{-s}^X  \big[X, d \Phi_{-x_2}^{X_2} \big(  W_\nu  (\Phi_{x_2}^{X_2})\big)\big].
\end{align*}
Notice that $h(0,s) =
- d \Phi_{-s}^X  [X, W_\nu ] =0$ by our assumption \eqref{12ML}. In a similar way, 
for any $k\in\N$ we have 
\[
\frac{\d ^ k }{\d x_2^k } h(0,s) =
(-1)^ {k +1} d \Phi_{-s}^X  [X, [X_2,\cdots [X_2, W_\nu] \cdots ] ] =0,
\]
with $X_2$ appearing $k$ times.  Since the function $x_2\mapsto h(x_2,s)$ is analytic
our claim \eqref{dsh} follows.

 \end{proof}

From now on, we assume that $a_ j (x) = a_j(x_1,x_2)$ are functions of the variables $x_1,x_2$.

A curve $\gamma\in AC([0,1];M)$  is horizontal if $\dot \gamma(t) \in \D(\gamma(t) )$ for a.e.~$t\in [0,1]$. In exponential coordinates we have $\gamma = (\gamma_1,\ldots,\gamma_n)$ where, for $j=3,\ldots, n$, the coordinates satisfy the following integral identities  
 \begin{equation}
\label{HL}   \gamma_j(t) = \gamma_j(0) + \int_0^t a_j(\gamma_1(s), \gamma_2(s) ) \dot\gamma_2(s) ds,\quad t\in [0,1].
 \end{equation}
When $\gamma(0)$  and $\gamma_1,\gamma_2$ are given, these formulas  determine in a unique way the whole horizontal curve $\gamma$. 
 We call $\kappa\in AC([0,1];\R^2)$, $\kappa =(\gamma_1,\gamma_2)$, the horizontal coordinates of $\gamma$.   
 
\begin{definition}[Spiral] 
\label{spiral} 
 We say that a horizontal curve $\gamma\in AC([0,1];M)$ is a
\emph{spiral}
  if, in exponential coordinates of the second type centered at $\gamma(0)$, the horizontal
coordinates $\kappa\in AC([0,1];\R^2)$ are of the form
 \begin{equation}
  \label{eq2}
  \kappa(t)=t\mathrm {e}^{i\phi(t)},
   \quad t\in]0,1],
 \end{equation}
where  $\phi\in C^1(  ]0,1]; \R^+)$ is a function, called  \emph{phase}
  of the spiral, such that $|\phi(t)|\to\oo$ and 
  $|\dot\phi(t)| \to\infty$ 
  as $t\to0^+$.
  \end{definition} 
  
  Without loss of generality, we shall focus our attention 
  on spirals that are oriented clock-wise, i.e., with a phase satisfying 
   $\phi(t)\to\oo$ and 
  $\dot\phi(t) \to-\infty$ 
  as $t\to0^+$.  Such a phase is decreasing 
near $0$.  Notice that if $\phi(t)\to \infty$ and $\dot  
  \phi(t)$ has a limit as $t\to0^+$ then this limit must be $-\infty$.

 \begin{example}
  \label{no2}
  An  interesting example of horizontal spiral is the double-logarithm spiral, the horizontal lift
  of the curve $\kappa$ in the plane of the form \eqref{eq2} with   phase  $\phi(t)=\log(-\log t)$, $t\in (0,1/2]$.  In this case, we have
  \[
  \dot\phi(t) = \frac{1}{t \log t }, \quad t\in (0,1/2],
  \]
and clearly $\phi(t)\to\infty$ and $\dot\phi(t)\to-\infty$ as $t\to0^+$.
In fact, we also have $t\dot\phi \in L^\infty(0,1/2)$, which means that $\kappa$ and thus $\gamma$ is Lipschitz continuous.
This spiral has the following additional properties:
  \begin{itemize}
   \item[i)]  for any $v\in \R^2$ with $|v|=1$
   there exists an  infinitesimal sequence of positive real numbers $(\lambda_n)_{
   n\in\N}$ such that $\kappa(\lambda_n t)/\lambda _n \to tv $ locally uniformly, as $n\to\infty$;

    \item[ii)] for any    infinitesimal sequence of positive real numbers $(\lambda_n)_{
   n\in\N} $ there exists a subsequence and a
   $v\in \R^2$ with $|v|=1$ such that $\kappa(\lambda_{n_k} t)/\lambda _{n_k} \to tv $ as $k\to\infty$, locally uniformly.
   \end{itemize}
   This means that the  tangent cone of $\kappa$ at $t=0$ consists of all half-lines in $\R^2$ emanating from $0$. 
    
 \end{example}

 \begin{remark} \label{FREE}
 We show that  Definition \ref{spiral}
 of a horizontal spiral does not  in fact depend on the chosen coordinates.
 
 Let  $F\in C^\infty(\R^n;\Rn)$ be a diffeomorphism such that $F(0)=0$ and $d_0 F(  \R^2\times\{0\}) =\R^2\times\{0\} $, where $d_0 F$ is the differential of $F$ at $0$. In the new coordinates, the spiral $\gamma$ becomes  $\zeta (t) = F(\gamma(t))$ with horizontal coordinates $\xi (t) = (F_1(\gamma(t)), F_2(\gamma(t)))$. We claim that after a reparameterization $\xi$ is of the form \eqref{eq2}, with a phase $\omega$ 
 satisfying $|\omega |\to \infty$ and $ |\dot{\omega}|\to\infty$. In particular, we will show that $|\dot\omega |\to\infty$.
 
 The function $s(t)  = |\xi(t) | = |(F_1(\gamma(t)), F_2(\gamma(t)))|$
 satisfies
 \begin{equation}
 \label{pippo}
      0<c_0 \leq \dot s(t) \leq c_1<\infty,\quad t\in (0,1].
 \end{equation}
 Define the function $\w \in  C^1((0,1])$ letting  $\xi(t) = s(t) \mathrm{e} ^{i \w(s(t))}$. Then differentiating the identity obtained inverting    
 \[
\tan \!  \big(   \omega(s(t)) \big) =  \frac{F_2(\gamma(t)) }{F_1(\gamma(t)) },\quad t\in (0,1],
  \]
  we obtain 
 \begin{equation}
 \label{topolino}
  \dot s(t) \dot{\w}(s(t)) = 
  \frac{1}
  { s (t)^2}
\langle  \Phi(\gamma(t)) 
,\dot\gamma (t)\rangle,\qquad t\in(0,1],
 \end{equation}
where the function $\Phi(x)  = 
F_1 (x)  \nabla F_2(x) 
 -F_2 (x)  \nabla F_1( x) 
$ has the Taylor development as $x\to 0$
\[
\begin{split}
\Phi(x)    
 = &  \langle \nabla F_1(0),x\rangle   \nabla F_2(0) 
 -  \langle \nabla F_2(0),x\rangle    \nabla F_1(0) + O(|x|^2) .
\end{split}
\]
 Observe that from \eqref{HL} it follows that  $|\dot\gamma_j(t)| = O(t)$ for $j\geq 3$.
 Denoting by $\bar\nabla$ the gradient in the first two variables, we deduce that as $t\to0^+$ we have 
 \begin{equation}\label{zx}
\langle  \Phi(\gamma) 
,\dot\gamma \rangle = 
\langle  F_1 (\gamma) \bar  \nabla F_2( \gamma) 
 -F_2 (\gamma)  \bar \nabla F_1( \gamma)
,\dot\kappa \rangle + O(t^2)
 \end{equation}
 with
 \[
 F_1 (\gamma) \bar  \nabla F_2( \gamma) 
 -F_2 (\gamma)  \bar \nabla F_1( \gamma)
=  \langle \bar  \nabla F_1(0),\kappa \rangle  \bar  \nabla F_2(0) 
 -  \langle \bar \nabla F_2(0),\kappa  \rangle \bar   \nabla F_1(0)
+O(t^2).
 \]
 Inserting the last identity and  $\dot\kappa = \mathrm{e} ^{i\phi} + i t \dot\phi 
 \mathrm{e} ^{i\phi}$
 into \eqref{zx}, after some computations we obtain 
 \[
\langle \Phi(\gamma) ,\dot\gamma\rangle =  \dot\phi t^2  \det (d_0 \bar F   (0)) + O(t^2),
 \]
 where $ \det (d_0 \bar F   (0))\neq 0$ is the determinant 
 Jacobian at $x_1=x_2=0$ of the  mapping $(x_1,x_2)\mapsto (F_1(x_1,x_2,0), F_2(x_1,x_2,0))$.
 Now the claim $|\dot{\w}(s)|\to\infty$ as $s\to0^+$ easily follows
 from \eqref{pippo}, \eqref{topolino} and from $|\dot\phi(t)|\to \oo$ as $t\to0^+$.

 \end{remark}

\section{Cut and correction devices} \label{sez2}

In this section, we begin the construction of the competing curve.
Let $\gamma$ be a  spiral with horizontal coordinates $\kappa$ as in \eqref{eq2}.
We can assume that $\phi$ is decreasing and that   $\phi(1)=1$ and we denote by
  $\psi:[1,\oo)\to(0,1]$ the inverse function of  $\phi$.  
For $k\in \N$ and $\eta \in[0,2\pi)$ we define $t_{k \eta}\in (0,1]$ as the unique solution to the equation $\phi(t_{k\eta  }) = 2\pi k + \eta$, i.e., we let $t_{k\eta } = \psi (2\pi k + \eta)$. 
The times
\begin{equation}
\label{tk}
t_k = t_{k0}= \psi(2\pi k) ,\quad k\in\N,
\end{equation}
will play a special role in our construction.  
The points $\kappa(t_k)$ are in the positive $x_1$-axis.

For a fixed $k\in\N$, we cut the curve $\k$ in the interval $[t_{k+1},t_{k}]$  following the line segment joining   $\k(t_{k+1})$ to $\k(t_{k})$ instead of the path $\k$, while we leave unchanged the remaining part of the path. We call this new curve $\k_{k}^{\cut}$ and, namely, we let 
 \[
 \begin{split}
  \k_{k}^{\cut
  }(t) & =\kappa(t)\quad \text{for}\quad  t\in [0,t_{k+1}]\cup [t_k, 1],
  \\
  \k_{k}^{\cut
  }(t) & =(t,0)
  \quad \text{for}\quad   t\in [t_{k+1},t_k]. 
 \end{split}
 \]
We denote by   $\g _{k}^{\cut}\in AC ([0,1];M)  $ the  horizontal curve with horizontal coordinates $\k_{k}^{\cut}$ and such that   $\g_{k}^{\cut}(0)=\g(0)$. For $t\in[0,t_{k+1}]$, we have $\g_{k}^{\cut}(t)=\g(t)$. 
To correct the errors produced by the cut on the end-point, we modify the curve $\k_{k}^{\cut}$ using a certain number of devices. The construction is made by induction.

We start with the base construction.  
Let $\E  = (h,\eta,\e)$ be a triple such that $h\in \N$,  $0<\eta<\pi/4$, and $\e \in \R $. Starting from a curve $\kappa:[0,1]\to \R^2$, we define the curve $\Dev(\kappa;\E):[0,1+2|\e|]\to\R^2$ in the following way:
 \begin{equation}
 \label{duello}
   \Dev(\kappa;\E) (t) =
   \left\{
   \begin{array}{ll}
   \kappa(t) & t\in [0,t_{h\eta}]
   \\
   \kappa(t_{h\eta}) + (\sgn(\e)(t-t_{h\eta}),0) & t\in [t_{h\eta},t_{h\eta}+|\e|]
   \\ 
   \kappa(t-|\e|) + (\e,0) & 
   t\in [t_{h\eta}+|\e|, t_{h}+|\e|]
   \\ 
   \kappa( t_{h}) + (2\e+\sgn(\e)(t_{h}-t), 0 )& 
   t\in [t_{h}+|\e|,t_{h}+2|\e|]
   \\ 
   \kappa(t- 2|\e|)& 
   t\in [t_{h}+2|\e|,1+2|\e|].
   \end{array}
   \right.
 \end{equation}
We denote by $\Dev(\g;\E)$ the horizontal curve with horizontal coordinates $\Dev(\k;\E)$. We let $\dot\Dev(\g;\E)=\frac{d}{dt}\Dev(\g;\E)$ and we indicate by $\Dev_i(\g;\E)$ the i-th coordinate of the corrected curve in exponential coordinates.
 
In the lifting formula \eqref{HL}, the intervals where $\gg_2 = 0$ do not contribute to the integral. For this reason, in \eqref{duello} we may cancel the second and fourth lines, where $\dot\Dev_2(\g;\E)=0$, and then reparameterize the curve on $[0,1]$. Namely, we define the discontinuous curve $\ov\Dev(\k;\E):[0,1]\to\R^2$ as
  \begin{equation}
   \label{cip}
   \ov\Dev(\k;\E)(t)=\left\{
   \begin{array}{ll}
    \kappa(t) & t\in [0,t_{h\eta}]
    \\ 
    \kappa(t) + (\e,0) & 
    t\in [t_{h\eta}, t_{h}]
    \\ 
    \kappa(t)& 
    t\in [t_{h},1],
   \end{array}
   \right.
  \end{equation}
and then we consider  the ``formal'' i-th coordinate  
\[
 \ov\Dev_i(\g;\E)(t)=\int_0^t a_i(\ov\Dev(\kappa;\E)(s)) \kk_2(s) ds.
\]
The following identities can be checked by  an elementary computation  (for $\e>0$)
 \begin{equation}\label{ciop}
  \ov\Dev(\g;\E)(t)=
  \left\{
  \begin{array}{ll}
   \Dev(\g;\E)(t) & t\in[0,t_{h\eta}]\\
   \Dev(\g;\E)(t+\e) & t\in[t_{h\eta},t_h]\\
   \Dev(\g;\E)(t+2\e) & t\in[t_h,1].
  \end{array}
  \right.
 \end{equation}
 With this notation,  the final  error produced on the i-th coordinate by the correction device $\E$ is
\begin{equation}
\label{pluto} 
 \g_i(1)-\Dev_i(\g;\E)(1+2|\e |)=\int_0^1 \big\{a_i(\kappa(s))-a_i(\ov\Dev(\kappa;\E)(s))\big\}\kk_2(s)
 ds.
\end{equation}
The proof of this formula is elementary and can be omitted.

We will iterate the above construction a certain number of times depending on a collections of triples $\E$. We first fix the number of triples and iterations.

For $i=3,\.,n$, let $\mathcal B_i=\{(\a,\b)\in\N^2\, :\, \a+\b=w_i-2\}$,
where $w_i\geq 2 $ is the homogeneous degree of the coordinate $x_i$.
Then,  the polynomials $p_i$ given by Theorem \ref{giallo}  and Theorem \ref{x1x2}
are of the form
 \begin{equation}
 \label{pi}
  p_i(x_1,x_2)=\sum_{(\a,\b)\in\mathcal B_i} c_{\a\b} \, x_1^{\a+1}x_2^\b,
 \end{equation}
 for suitable constants $c_{\a\b}\in\R$. 
We set    
\begin{equation}
    \l=\sum_{i=3}^n\mathrm{Card} (\mathcal B_i),
\end{equation}
and we consider an $(\ell-2)$-tuple of  triples $\bar {\E }= (\E_3,\ldots,\E_\l)$
such that $h_\l<h_{\ell-1}<\ldots <h_3<k$. Each triple is used to correct one monomial.

Without loss of generality, we   simplify the construction in the following way.
In the sum \eqref{pi}, we can assume that $c_{\a\b}=0$ for all $(\a,\b)\in \mathcal B _i$ but one.
Namely, we can assume that
\begin{equation}
\label{pipi}
     p_i(x_1,x_2) = x_1^{\a_i+1}x_2^{\b_i}\quad \textrm{with}\quad \a_i+\b_i=w_i-2,
\end{equation}
and with   $c_{\a_i\b_i}=1$. In this case, we have $\l = n$ and we will use $n-2$ devices associated with the triples $\E_3,\ldots, \E_{n}$ to correct the coordinates $i=3,\ldots,n$.
By the bracket generating property  of the vector fields $X_1$ and $X_2$ 
 and by the stratified basis property for $X_1,\ldots,X_n$, the pairs $( \a_i, \b_i)$
satisfy the following condition
\begin{equation} 
\label{INJ}
 ( \a_i, \b_i)\neq ( \a_j, \b_j)
\quad
\textrm{for}
\quad 
i\neq j.
\end{equation}
 
\noindent
From now on in the rest of the paper we will assume that the polynomials $p_i$ are of the form \eqref{pipi} with \eqref{INJ}.

Now we clarify the inductive step of our construction.
 Let $\E _3 = (h_3,\eta_3,\e_3)$ be a triple such that $ h_3< k $. We define the curve 
 $\kappa^{(3)}
 = \Dev(
  \k_{k}^{\cut};\E_3)$. Given a triple
 $\E _4 = (h_4,\eta_4,\e_4)$ with $h_4< h_3 $ we then define
 $\kappa^{(4)} 
 = \Dev(  \k ^{(3)};\E_4)$.
  By induction on $\ell\in\N$,
given a triple
 $\E _\ell = (h_\ell,\eta_\ell,\e_\ell )$ with $h_\ell < h_
 {\ell-1}$,  we   define
 $\kappa^{(\ell)} 
 = \Dev(  \k ^{(\ell-1)};\E_\ell)$. When $\ell=n$ we stop.

 We define the planar curve $\Dev(\k;k, {\bar \E }) \in AC ([0,1+2 \bar \e];\R^2)$ as $\Dev(\k;k,{\bar \E })  = \kappa^{(n)}$ according to the inductive construction explained above, where $\bar \e = |\e_3|+\ldots+ |\e_n |$.
Then we call $\Dev (  \gamma;k,\bar \E)   \in AC ([0,1+2 \bar \e];M)$,
the horizontal lift of $\Dev(\k;k,\bar \E)$ with $\Dev(\gamma;k,\E)(0) =\gamma(0)$, the modified curve of $\gamma$ associated with $\bar \E$ and with cut of parameter $k\in \N$.
There is a last adjustment to do.
In $[0,1+2 \bar \e]$
there are $2(n-2)$ subintervals where $\dot\kappa_2^{(n)}=0$. On each of  these intervals the coordinates $\Dev _j (  \gamma;k,\bar \E) 
$ are constant. According to the procedure explained in \eqref{duello}--\eqref{ciop}, we erase these intervals and we parametrize the resulting curve on $[0,1]$.
We denote this curve by   $\bar \gamma = \ov  \Dev( \gamma;k,\bar \E) $.

\begin{definition} [Adjusted modification of $\g$]
We call the curve $\bar \gamma = \ov  \Dev( \gamma;k,\bar \E): [0,1]\to M$
the adjusted modification of $\gamma$ relative to the collections of devices $\bar\E=(\E_3,\ldots,\E_n)$
and with   cut of parameter  $k$.
\end{definition}

 Our next task is to compute the error produced by cut and devices on the end-point of the spiral.
For $i=3,\ldots, n$ and for $t\in [0,1]$ we let 
\begin{equation}
 \label{Delta}
 \De_i^\g(t) = a_i(\k(t))\dot\k_2(t) - a_i(\bar \k (t)) \dot{\bar \k} _2(t).
\end{equation}
When $t<t_{k+1}$ or $t>t_k$ we have $\kk_2=\dot{\bar\k}_2$ and so the definition above reads
\[
 \Delta_i^\g (t) = \big(a_i(\k(t)) - a_i(\bar\k(t))\big)\kk_2(t).
\]

By the recursive application of the argument used to obtain \eqref{pluto}, we get the following formula for  the error at the final time $\bar t =t_{h_n}$:
\begin{equation}
\label{erri}
\begin{split}
 E_i^{k,\bar\E } & = \g_i(\bar t )-\bar \g_i(\bar t )
 =\int_{t_{k+1}}^{\bar t } \Delta_i^\g(t) dt
 \\
 &= \int_{F_k} \Delta_i^\g(t) dt + \sum_{j=3}^n
 \Big(
 \int_{A_j} \Delta_i^\g(t) dt
 +
 \int_{B_j} \Delta_i^\g(t) dt \Big).
  \end{split}
\end{equation}
In \eqref{erri} and in the following, we use the following notation for the intervals:
\begin{equation}
 F_k=[t_{k+1},t_k], \quad A_j=[t_{h_{j-1}},t_{h_j\eta_j}], \quad B_j=[t_{h_j\eta_j},t_{h_j}],
\end{equation}
with    $t_{h_2}= t_k$.
 We used also the fact that on $[0,t_{k+1}]$ we have $\gamma=\bar \gamma$.

On the interval $F_k$ we have  $\dot{\bar \k}_2 =0$ and thus
\begin{equation} \label{erri2}
\int_{F_k} \Delta_i^\g\, dt = \int_{F_k}
 \big\{  p_i(\k) +r_i(\k) \big\} \dot\k_2dt.
\end{equation} 
On the intervals $A_j$ we have $\kappa=\bar\kappa$ and thus
\begin{equation} \label{erri3}
 \int_{A_j  }  \Delta_i^\g dt = 0,
\end{equation}
because the functions $a_i$ depend only on $\k$.
Finally, on the intervals $B_j$ we have $\bar \k_1 = \k_1+ \e_j$ and $\kappa_2 =\bar\kappa_2$ and thus  
\begin{equation} \label{erri4}
 \int_{B_j}
 \Delta_i ^\g\, dt = \int_{B_j} \{p_i(\k) - p_i(\k+(\e_j,0))\} \kk_2dt +
 \int_{B_j} \{r_i(\k) - r_i(\k+(\e_j,0))\} \kk_2dt.
\end{equation}

  Our goal is to find $k\in \N$ and devices $\bar\E$ such that
$ E_i^{k,\bar\E }=0$ for all $i=3,\ldots, n$ and such that the modified curve $\Dev(\g;k,\bar\E)$ is shorter than $\g$.

\section{Effect of  cut and devices on  monomials and remainders}

\label{four}

Let  $\gamma$ be a horizontal  spiral   with horizontal coordinates $\k\in AC([0,1];\R^2)$ of the form \eqref{eq2}.   We prove some   estimates about the integrals of the polynomials \eqref{pipi} along the curve $\k$. These estimates are preliminary to the study of the errors introduced in \eqref{erri}.

For $\a,\b\in\N$, we associate with the monomial $p_{\a\b}(x_1,x_2)=
x_1^{\a+1}x_2^{\b}$ the function $   \g_{\a\b}$ defined for $t\in [0,1]$ by
 \begin{equation*}
  \begin{split}
   \g_{\a\b}(t)&=\int_{\k|_{[0,t]}}{p_{\a\b}(x_1,x_2)dx_2}=\int_0^t{p_{\a\b}(\k(s)) \dot \k_2(s)ds}.
  \end{split}
 \end{equation*}
When $p _ i = p_{\a\b}$, the function $\g_{\a\b}$ is  the leading term in the i-th coordinate of $\g$ in exponential coordinates. In this case,  the problem of estimating  $\g_i(t)$ reduces   to the estimate of integrals of the form
 \begin{equation} \label{I}
  I_{\w\eta}^{\a\b}=\int_{t_{\eta}}^{t_{\w}}{\k_1(t)^{\a+1}\k_2(t)^\b \dot \k _2(t)dt},
 \end{equation}
 where $\w \leq \eta $ are angles,  $t_\w=\psi (\w)$ and $t_\eta =\psi (\eta )$. 
 These integrals are related to the integrals
  \begin{equation}\label{J}
   J_{\w\eta}^{\a\b}=\int_{\w}^{\eta}{t_\0^{\a+\b+2}\cos^\a(\0)\sin^\b(\0)d\0}.
  \end{equation}
In the following, we will  use the short notation
 $    D^{\a\b}_\w = \cos^{\a+1}(\w)\sin^{\b+1}(\w)$.

 \begin{lemma}
 \label{L5}
 For any $\a,\b\in\N$ and  $\w\leq \eta$   we have the identity
  \begin{equation}
  \label{I->J}
   \begin{split}
    (\a+\b+2)I_{\w\eta}^{\a\b}=&t_\w^{\a+\b+2} D^{\a\b}_\w
    -t_{\eta}^{\a+\b+2} D^{\a\b}_{\eta}
   -(\a+1)J_{\w\eta}^{\a\b}.
   \end{split}
  \end{equation}
 \end{lemma}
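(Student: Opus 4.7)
The plan is to reparametrize the integral defining $I_{\omega\eta}^{\alpha\beta}$ by the angular variable $\vartheta=\phi(t)$ and then reduce to $J_{\omega\eta}^{\alpha\beta}$ via a single integration by parts combined with the Pythagorean identity. Since $\phi$ is strictly decreasing near $0$ (the spiral being oriented clock-wise), $\psi=\phi^{-1}$ is also decreasing, so $\omega\leq\eta$ gives $t_\eta\leq t_\omega$. Under the substitution $\vartheta=\phi(t)$ we have $\kappa_1(t)=t_\vartheta\cos\vartheta$, $\kappa_2(t)=t_\vartheta\sin\vartheta$, and $\dot\kappa_2(t)\,dt=(\psi'(\vartheta)\sin\vartheta+t_\vartheta\cos\vartheta)\,d\vartheta$, where $\psi'$ denotes the derivative of $\psi$. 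Accounting for the orientation reversal (the interval $[t_\eta,t_\omega]$ is sent to $[\omega,\eta]$ with a sign change), one obtains
\begin{equation*}
I_{\omega\eta}^{\alpha\beta}=-\int_\omega^\eta t_\vartheta^{\alpha+\beta+1}\psi'(\vartheta)\cos^{\alpha+1}\vartheta\,\sin^{\beta+1}\vartheta\,d\vartheta-\int_\omega^\eta t_\vartheta^{\alpha+\beta+2}\cos^{\alpha+2}\vartheta\,\sin^\beta\vartheta\,d\vartheta.
\end{equation*}

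In the first of these integrals I would integrate by parts, using $t_\vartheta^{\alpha+\beta+1}\psi'(\vartheta)=\tfrac{1}{\alpha+\beta+2}\tfrac{d}{d\vartheta}t_\vartheta^{\alpha+\beta+2}$. The boundary contribution evaluates to $\tfrac{1}{\alpha+\beta+2}\bigl[t_\omega^{\alpha+\beta+2}D_\omega^{\alpha\beta}-t_\eta^{\alpha+\beta+2}D_\eta^{\alpha\beta}\bigr]$, matching the endpoint terms of the claim. Differentiating the trigonometric factor $\cos^{\alpha+1}\vartheta\sin^{\beta+1}\vartheta$ produces $-(\alpha+1)\cos^\alpha\vartheta\sin^{\beta+2}\vartheta+(\beta+1)\cos^{\alpha+2}\vartheta\sin^\beta\vartheta$, which, combined with the leftover second integral, leaves three remaining pieces all weighted by $t_\vartheta^{\alpha+\beta+2}$.

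A short algebraic check shows that the coefficients in front of $\cos^{\alpha+2}\vartheta\sin^\beta\vartheta$ combine to $\tfrac{\beta+1}{\alpha+\beta+2}-1=-\tfrac{\alpha+1}{\alpha+\beta+2}$, which is also the coefficient multiplying $\cos^\alpha\vartheta\sin^{\beta+2}\vartheta$, so the three integrands collapse via $\sin^2\vartheta+\cos^2\vartheta=1$ to $-\tfrac{\alpha+1}{\alpha+\beta+2}\cos^\alpha\vartheta\sin^\beta\vartheta$. This is exactly $-\tfrac{\alpha+1}{\alpha+\beta+2}$ times the integrand of $J_{\omega\eta}^{\alpha\beta}$, and multiplying through by $\alpha+\beta+2$ yields \eqref{I->J}. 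The computation is entirely mechanical; the only point requiring care is tracking the sign flip induced by the orientation-reversing substitution $\vartheta=\phi(t)$, which is what produces the asymmetric signs of the two boundary terms in the statement.
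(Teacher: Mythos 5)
Your proof is correct and is essentially the same computation as in the paper: both substitute the parametrization $\kappa_1=t\cos\phi$, $\kappa_2=t\sin\phi$, split $I^{\alpha\beta}_{\omega\eta}$ into two integrals, integrate by parts in the first one to produce the boundary terms $t^{\alpha+\beta+2}D^{\alpha\beta}$, and use $\sin^2+\cos^2=1$ to collapse the remaining integrals into $-\tfrac{\alpha+1}{\alpha+\beta+2}J^{\alpha\beta}_{\omega\eta}$. The only cosmetic difference is that you change variables to $\vartheta=\phi(t)$ before integrating by parts, while the paper integrates by parts in $t$ first and changes variables at the end.
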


 \begin{proof} Inserting into
 $I_{\w\eta}^{\a\b}$
   the identities $\k_1(t)=t\cos(\phi(t))$, $\k_2(t)=t\sin(\phi(t))$, and $\dot\k  _2(t)=\sin(\phi(t))+t\cos(\phi(t))\pphi(t)$ we get
  \[
   I_{\w\eta}^{\a\b}=\int_{t_{\eta }}^{t_{\w}}{t^{\a+\b+1}
   D^{\a\b}_{\phi(t)}
   dt}+\int_{t_{\eta }}^{t_{\w}}{t^{\a+\b+2}\cos^{\a+2}(\phi(t))\sin^{\b}(\phi(t))\pphi(t)dt},
  \]
and, integrating by parts in the first integral,  this identity reads
  \begin{align*}
   I_{\w\eta}^{\a\b}=&\left[\frac{t^{\a+\b+2}
    D^{\a\b}_{\phi(t)}}{\a+\b+2}\right]^{t_{\w}}_{t_{\eta}}
   +\frac{\a+1}{\a+\b+2}\int_{t_{\eta}}^{t_{\w}}{t^{\a+\b+2}\cos^\a(\phi(t))\sin^{\b+2}(\phi(t))\pphi(t)dt}\\
   &-\frac{\b+1}{\a+\b+2}\int_{t_{\eta}}^{t_{\w}}{t^{\a+\b+2}\cos^{\a+2}(\phi(t))\sin^{\b}(\phi(t))\pphi(t)dt}\\
   &+\int_{t_{\eta}}^{t_{\w}}{t^{\a+\b+2}\cos^{\a+2}(\phi(t))\sin^{\b}(\phi(t))\pphi(t)dt}.
  \end{align*}
  Grouping the trigonometric terms and then performing the change of variable $\phi(t)=\0$, we get
  \begin{align*}
   I_{\w\eta}^{\a\b}=&\left[\frac{t_\0^{\a+\b+2}
    D^{\a\b}_{\0} }{\a+\b+2}\right]^\w_{\eta}
   +\frac{\a+1}{\a+\b+2}\int_{\eta}^{\w}{t_\0^{\a+\b+2}\cos^\a(\0)\sin^\b(\0)d\0}.
  \end{align*}
  This is our claim.
 \end{proof}

 For $\a,\b\in\N$, $h\in\N$ and $\eta \in (0,\pi/4)$ we let
 \begin{equation} \label{juwel}
    j^{\a\b}_{h\eta} = \eta^\beta \int_{2 h \pi} ^{ 2h\pi +\eta } t_\vartheta^{\a+\b+2} \, d\vartheta=\int_{t_{h\eta}}^{t_h}t^{\a+\b+2}|\pphi(t)|dt,
 \end{equation}
 where in the second equality we let $\vartheta=\phi(t)$.

 \begin{corollary} There exist constants $0<c_{\a\b}< C_{\a\b}$ depending on $\a,\b\in\N$ such that for all
  $h\in\N$ and $\eta \in (0,\pi/4)$ we have
  \begin{equation}
  \label{stingaling}
   c_{\a\b}
    j^{\a\b}_{h\eta}  \leq | I^{\a\b}_{2h\pi, 2h\pi+\eta}|\leq C_{\a\b}     j^{\a\b}_{h\eta} .
  \end{equation}
 \end{corollary}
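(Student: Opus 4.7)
The corollary is a direct consequence of the identity \eqref{I->J} of Lemma \ref{L5} applied with $\omega = 2h\pi$ and upper angle $2h\pi + \eta$, combined with elementary estimates of the trigonometric factors on $[0,\pi/4]$.

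Two features of this choice of angles simplify the right-hand side. First, $\sin(2h\pi) = 0$ makes the boundary contribution $D^{\alpha\beta}_{2h\pi}$ vanish. Second, the $2\pi$-periodicity of $\cos$ and $\sin$ gives $D^{\alpha\beta}_{2h\pi+\eta} = \cos^{\alpha+1}(\eta)\sin^{\beta+1}(\eta)$, and after the substitution $u = \vartheta - 2h\pi$, identity \eqref{I->J} becomes
\begin{equation*}
(\alpha+\beta+2)\,|I^{\alpha\beta}_{2h\pi,2h\pi+\eta}| = t_{2h\pi+\eta}^{\alpha+\beta+2}\cos^{\alpha+1}(\eta)\sin^{\beta+1}(\eta) + (\alpha+1)\!\int_0^\eta t_{2h\pi+u}^{\alpha+\beta+2}\cos^\alpha(u)\sin^\beta(u)\,du,
\end{equation*}
with both terms on the right non-negative (so their sum is the absolute value of the left-hand side).

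On $[0,\eta]\subset[0,\pi/4]$ one has the uniform two-sided estimates $2^{-m/2}\leq \cos^m(u)\leq 1$ and $(2/\pi)^m u^m \leq \sin^m(u) \leq u^m$. These reduce the problem to comparing $\eta^{\beta+1}t_{2h\pi+\eta}^{\alpha+\beta+2} + \int_0^\eta u^\beta\, t_{2h\pi+u}^{\alpha+\beta+2}\,du$ with $j^{\alpha\beta}_{h\eta}=\eta^\beta\!\int_{2h\pi}^{2h\pi+\eta} t_\vartheta^{\alpha+\beta+2}\,d\vartheta$, up to constants depending only on $\alpha,\beta$. The upper bound is immediate: in the integral one uses $u^\beta\leq \eta^\beta$, and for the boundary term the monotonicity of $\vartheta\mapsto t_\vartheta$ yields $\eta\, t_{2h\pi+\eta}^{\alpha+\beta+2}\leq \int_0^\eta t_{2h\pi+u}^{\alpha+\beta+2}\,du$, so both contributions are controlled by $j^{\alpha\beta}_{h\eta}$. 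For the lower bound, the positivity of the two terms lets one bound $|I^{\alpha\beta}_{2h\pi,2h\pi+\eta}|$ below by a constant multiple of either of them, and combining both contributions while keeping the lower trigonometric estimates produces a lower bound of the form $c_{\alpha\beta}\,j^{\alpha\beta}_{h\eta}$.

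\textbf{Main obstacle.} The delicate part is securing the lower bound with constants \emph{uniform} in $h\in\mathbb{N}$ and $\eta\in(0,\pi/4)$. The monotonicity of $t_\vartheta$ gives $\int_0^\eta t_{2h\pi+u}^{\alpha+\beta+2}\,du \geq \eta\, t_{2h\pi+\eta}^{\alpha+\beta+2}$, which is the \emph{opposite} of the inequality one would want for controlling $j^{\alpha\beta}_{h\eta}$ by the boundary contribution alone. The lower bound must therefore exploit both contributions in the identity in an essential way; in particular, one has to combine the boundary term (which carries the factor $\eta^{\beta+1}t_{2h\pi+\eta}^{\alpha+\beta+2}$) with the integral part of $J$ (where the weight $\sin^\beta(u)$ concentrates the mass near $u=\eta$), and use the monotonicity of $t_\vartheta$ on $[2h\pi,2h\pi+\eta]$ to reproduce the full $\eta^\beta$-weighted integral appearing in $j^{\alpha\beta}_{h\eta}$.
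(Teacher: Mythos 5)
Your derivation of the identity from Lemma \ref{L5}, your reduction using the two-sided trigonometric bounds on $(0,\pi/4)$, and your proof of the upper bound all match the paper's argument.

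The concern you raise about the lower bound is not just a loose end: you have pinpointed exactly where the paper's own proof is incomplete. The paper asserts the chain
\[
c_{\a\b}\eta^{\b+1}t_{2h\pi+\eta}^{\a+\b+2}\ \leq\ c_{\a\b}\,\eta^\b\!\int_{2h\pi}^{2h\pi+\eta}\!t_\vartheta^{\a+\b+2}\,d\vartheta\ \leq\ J^{\a\b}_{2h\pi,2h\pi+\eta}\ \leq\ \eta^\b\!\int_{2h\pi}^{2h\pi+\eta}\!t_\vartheta^{\a+\b+2}\,d\vartheta,
\]
but the second inequality, which is precisely the lower bound $c_{\a\b}\,j^{\a\b}_{h\eta}\leq J^{\a\b}_{2h\pi,2h\pi+\eta}$, is stated there without justification, and your proposal does not supply one either. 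Neither summand of the identity controls $j^{\a\b}_{h\eta}$ from below by an elementary estimate: the boundary term scales as $\eta^{\b+1}t_{2h\pi+\eta}^{\a+\b+2}$, which, since $\vartheta\mapsto t_\vartheta$ is decreasing, is a \emph{lower} bound for $\eta^\b\int_{2h\pi}^{2h\pi+\eta}t_\vartheta^{\a+\b+2}\,d\vartheta$ rather than an upper bound; and for the $J$-term, Chebyshev's integral inequality for the decreasing weight $u\mapsto t_{2h\pi+u}^{\a+\b+2}$ against the increasing weight $u\mapsto\sin^\b u$ gives $J\leq(\b+1)^{-1}\eta^\b\int_0^\eta t_{2h\pi+u}^{\a+\b+2}\,du$, again the wrong direction. ``Combining both contributions'' does not repair this, because both face the same obstruction: $\sin^\b u$ vanishes to order $\b$ at $u=0$, exactly where a rapidly decaying $t_\vartheta^{\a+\b+2}$ carries its mass. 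Concretely, for the double-log spiral of Example \ref{no2}, where $t_\vartheta=e^{-e^\vartheta}$, a Laplace-type estimate with $A=(\a+\b+2)e^{2h\pi}$ yields $J^{\a\b}_{2h\pi,2h\pi+\eta}\sim e^{-A}\,\Gamma(\b+1)\,A^{-\b-1}$ while $j^{\a\b}_{h\eta}\sim\eta^\b e^{-A}A^{-1}$, so for $\b\geq1$ the ratio $|I^{\a\b}_{2h\pi,2h\pi+\eta}|/j^{\a\b}_{h\eta}$ tends to $0$ as $h\to\infty$. Thus the lower bound in \eqref{stingaling} cannot hold with a constant uniform in $h$ when $\b\geq1$; a correct formulation requires either a hypothesis limiting the oscillation of $\psi$ across each interval $[2h\pi,2h\pi+\eta]$, or a different comparison quantity such as $\int_{2h\pi}^{2h\pi+\eta}t_\vartheta^{\a+\b+2}\sin^\b\vartheta\,d\vartheta$, which is comparable to $J$ simply because $\cos^\a\vartheta$ is bounded above and below on $(0,\pi/4)$.
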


 \begin{proof} From \eqref{I->J} with $D^{\a\b}_{2h\pi } =0 $ we obtain
 \[
    (\a+\b+2) | I_{2h\pi, 2h\pi+\eta  }^{\a\b}|
    =
    t_{2h\pi +\eta}^{\a+\b+2} D^{\a\b}_{\eta}
   +(\a+1)J_{2h\pi, 2h\pi+\eta}^{\a\b},
 \]
 where $c_{\a\b} \eta^{\b+1}\leq  D^{\a\b}_\eta \leq \eta ^{\b+1}$, because $\eta\in (0,\pi/4)$, and
 \[
 c_{\a\b}
 \eta^{\b+1}  t_{2h\pi+\eta }^{\a+\b+2}
 \leq
  c_{\a\b} \eta ^\beta
  \int_{2 h \pi} ^{ 2h\pi +\eta }
  t_\0^{\a+\b+2}
  d\0\leq
  J_{2h\pi, 2h\pi+\eta}^{\a\b}
  \leq    \eta^\beta
  \int_{2 h \pi} ^{ 2h\pi +\eta }
  t_\0^{\a+\b+2} d\0 .
 \]
 The claim follows.
 \end{proof}

 \begin{remark}
\label{TECH}  
We will use the estimates \eqref{stingaling} in the proof of the solvability of the end-point equations. In particular, the computations above are  possible thanks to the structure of the monomials $p_i$: here, their  dependence only on the variables $x_1$ and $x_2$, ensured by 
\eqref{12ML}, is crucial.
 When the coefficients $a_i$ depend on all the variables $x_1,\ldots,x_n$, repeating the same computations seems difficult. Indeed, 
 in the integrals \eqref{I} and \eqref{J} there are also the coordinates $\gamma_3,\ldots,\gamma_n$. Then, the new identity \eqref{I->J} becomes more complicated because other addends appear after the integration by parts, owing to the derivatives of $\gamma_3,\ldots,\gamma_n$. Now, by the presence of these new terms  the estimates from below in \eqref{stingaling} are difficult, while the estimates from above remain possible.
\end{remark}

We denote by   $\k_\e$ the rigid translation by $\e\in\R $ in the $x_1$ direction of the curve $\k$. Namely, we let  $\k_{\e,1}=\k_1+\e$ and  $\k_{\e,2}=\k_2$.
Recall the notation $t_h =\psi (2\pi h)$ and $t_{h\eta} =\psi(2\pi h+\eta)$,
for $h\in\N$ and $\eta>0$.
In particular, when we take $\e_j$, $h_j$ and $\eta_j$ related  to the $j$-th correction-device, we have $\k_{\e_j}|_{B_j}=\bar\k|_{B_j}$.

In the study of the polynomial part of integrals in \eqref{erri4}  we
need estimates for the quantities 
\[
\Delta_{h\eta\e}^{\a\b}
 =\int_{\k_\e|_{[t_{h\eta},t_h]}}{p_{\a\b}(x_1,x_2)dx_2}-
 \int_{\k|_{[t_{h\eta},t_h]}}{p_{\a\b}(x_1,x_2)dx_2}.
\]

\begin{lemma}
We have
 \begin{equation}
  \begin{split}
  \label{DELTA}
\Delta_{h\eta\e}^{\a\b} =
  (\alpha+1) \e I_{2h\pi,2h\pi+\eta}^{\a-1,\b}+ O  (\e^2),
 \end{split}
 \end{equation}
 where $O(\e^2)/\e^2$ is bounded as $\e\to 0$.
 \end{lemma}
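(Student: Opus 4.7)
The plan is to exploit the fact that the translated curve $\kappa_\e$ differs from $\kappa$ only in the first coordinate, by a constant shift in the $x_1$-direction. Consequently $\dot\kappa_{\e,2} = \dot\kappa_2$, and the difference of the two integrals collapses to
\[
 \Delta_{h\eta\e}^{\a\b} = \int_{t_{h\eta}}^{t_h} \bigl\{ (\kappa_1(t)+\e)^{\a+1} - \kappa_1(t)^{\a+1}\bigr\} \kappa_2(t)^\b \dot\kappa_2(t)\,dt.
\]
This is the crucial simplification: we do not have to analyze any change in the parameterization, since the shift is in the direction orthogonal to $dx_2$.

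Next I would expand the bracketed expression by the binomial theorem,
\[
 (\kappa_1+\e)^{\a+1} - \kappa_1^{\a+1} = (\a+1)\e\, \kappa_1^\a + \sum_{k=2}^{\a+1} \binom{\a+1}{k} \e^k \kappa_1^{\a+1-k},
\]
and identify the linear-in-$\e$ term, after integration against $\kappa_2^\b \dot\kappa_2$, as $(\a+1)\e\, I_{2h\pi,2h\pi+\eta}^{\a-1,\b}$, by direct comparison with definition \eqref{I} (here the notational index $\a-1$ produces the power $\kappa_1^\a$, in agreement with the convention that the first index is shifted by one in $p_{\a\b}$).

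For the remainder, I would factor out $\e^2$ and bound each residual integral
\[
 \int_{t_{h\eta}}^{t_h} \kappa_1(t)^{\a+1-k} \kappa_2(t)^\b \dot\kappa_2(t)\,dt,\qquad k\geq 2,
\]
by a constant depending only on $h,\eta,\a,\b$. This is immediate because on the fixed interval $[t_{h\eta},t_h]\subset (0,1]$ we have $|\kappa_1|,|\kappa_2|\leq 1$, and $\dot\kappa_2$ is bounded since $\phi\in C^1$ away from $0$. Summing the finitely many terms yields a uniform bound, so the error is $O(\e^2)$ with the implicit constant independent of $\e$ for $\e$ in any bounded range.

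No serious obstacle arises: the structural reason the argument works is that $p_{\a\b}$ depends only on $(x_1,x_2)$ and the translation acts cleanly on $x_1$, so the calculation reduces to a one-line binomial expansion followed by a trivial boundedness estimate. The only mildly subtle point is to verify that the interval $[t_{h\eta},t_h]$ is bounded away from $0$ so that $\dot\kappa_2$ is bounded there, allowing the remainder to be controlled uniformly.
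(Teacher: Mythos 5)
Your proof is correct and follows essentially the same route as the paper's: both reduce to the binomial expansion of $(\kappa_1+\e)^{\a+1}-\kappa_1^{\a+1}$, identify the linear term as $(\a+1)\e\, I_{2h\pi,2h\pi+\eta}^{\a-1,\b}$ via the index shift in the definition of $I^{\a\b}$, and observe that the remaining terms carry at least $\e^2$. The only difference is cosmetic (you index the binomial sum by the power of $\e$ rather than of $\kappa_1$, and you spell out why the residual integrals are bounded on $[t_{h\eta},t_h]\subset(0,1]$), which the paper leaves implicit.
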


\begin{proof}
The proof is an elementary computation:
\begin{equation}
  \begin{split}
  \label{DELTA1}
\Delta_{h\eta\e}^{\a\b}
 & =\int_{t_{h\eta}}^{t_h}
  \dot \k_2(t) \k_2(t)^{\b}
  \big[(\k_1(t)+\e)^{\a+1}-
\k_1(t)^{\a+1}\big]  dt
\\
  &=\sum_{i=0}^\a\binom{\a+1}{i}\e^{\a+1-i}\int_{t_{h \eta}}^{t_h}{\dot \k_2(t)\k_1(t)^i\k_2(t)^{\b}dt}\\
  &=\sum_{i=0}^\a\binom{\a+1}{i}\e^{\a+1-i}I_{2h\pi,2h\pi+\eta}^{i-1,\b}
  \\
  &
  =
  (\alpha+1) \e I_{2h\pi,2h\pi+\eta}^{\a-1,\b}+ O (\e^2).
 \end{split}
 \end{equation}
\end{proof}.

We estimate the terms in \eqref{erri2}. The quantities $\Delta_i^\gamma$ are introduced in \eqref{DELTA}.

\begin{lemma}
  \label{stimacut}
  Let $\gamma$ be a horizontal spiral with phase $\phi$. For all $i=3,\ldots, n$ and for all $k\in\N$ large enough we have
  \begin{equation}
  \label{eqstimacut}
\Big |   \int_{F_k}  \Delta_i^\g dt \Big| \leq
 \int_{F_k}   t ^{\a_i+\b_i +2} |\dot\phi |  dt  .
  \end{equation}

 \end{lemma}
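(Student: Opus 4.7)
The plan is to split $\Delta_i^\gamma$ on $F_k$ into a polynomial and a remainder contribution via the decomposition $a_i = p_i + r_i$ from Theorem \ref{giallo}, together with the explicit form $p_i(x_1,x_2) = x_1^{\alpha_i+1} x_2^{\beta_i}$ from \eqref{pipi}. Since $\dot{\bar\kappa}_2 \equiv 0$ on $F_k$, formula \eqref{erri2} reduces the target integral to
\[
\int_{F_k} \Delta_i^\gamma\, dt = \int_{F_k} p_i(\kappa)\dot\kappa_2\, dt + \int_{F_k} r_i(\kappa)\dot\kappa_2\, dt,
\]
which I will estimate separately.

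For the polynomial term I would apply Lemma \ref{L5} with $\omega = 2k\pi$ and $\eta = 2(k+1)\pi$. The decisive observation is that both boundary terms $t^{w_i} D^{\alpha_i\beta_i}$ vanish, because $D^{\alpha\beta}_{2h\pi} = \cos^{\alpha+1}(2h\pi)\sin^{\beta+1}(2h\pi) = 0$ whenever $\beta \geq 0$. Consequently $I^{\alpha_i\beta_i}_{2k\pi, 2(k+1)\pi} = -\frac{\alpha_i+1}{w_i}\, J^{\alpha_i\beta_i}_{2k\pi, 2(k+1)\pi}$, and estimating $|\cos^{\alpha_i}\sin^{\beta_i}| \leq 1$ in $J$ together with the change of variables $\vartheta = \phi(t)$ yields
\[
\Big|\int_{F_k} p_i(\kappa)\dot\kappa_2\, dt\Big| \leq \frac{\alpha_i+1}{w_i}\int_{F_k} t^{w_i}|\dot\phi|\, dt.
\]
The coefficient $\frac{\alpha_i+1}{\alpha_i+\beta_i+2}$ is strictly less than $1$ since $\beta_i \geq 0$, so there is a universal slack $\delta_i := 1 - \frac{\alpha_i+1}{w_i} > 0$ into which the remainder must fit.

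For the remainder I would combine the analytic estimate $|r_i(\kappa(t))| \leq C_1 \|\kappa(t)\|^{w_i} \leq C' t^{w_i}$ from Theorem \ref{giallo} (using $w_1 = w_2 = 1$ and $|\kappa(t)| = t$) with $|\dot\kappa_2(t)| \leq 1 + t|\dot\phi(t)|$. This produces two contributions: $C'\int_{F_k} t^{w_i+1}|\dot\phi|\, dt \leq C' t_k \int_{F_k} t^{w_i}|\dot\phi|\, dt$, which is $o(1)$ on the target scale as $k \to \infty$; and $C'\int_{F_k} t^{w_i}\, dt$, for which I would exploit the spiral condition $|\dot\phi(t)| \to \infty$ as $t \to 0^+$. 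Namely, for any $M$ there exists $\delta > 0$ with $|\dot\phi| \geq M$ on $(0,\delta)$, and for $k$ large $F_k \subset (0,\delta)$, giving $\int_{F_k} t^{w_i}\, dt \leq M^{-1}\int_{F_k} t^{w_i}|\dot\phi|\, dt$. Choosing $M$ large and then $k$ larger still, the total remainder is bounded by $\delta_i \int_{F_k} t^{w_i}|\dot\phi|\, dt$, which combined with the polynomial bound gives \eqref{eqstimacut}. The main subtlety, I expect, is ensuring the slack $\delta_i > 0$ is actually available: this depends on the \emph{exact} vanishing of the boundary terms in Lemma \ref{L5} at angles of the form $2k\pi$, which is built into the choice of cut times $t_k$ in \eqref{tk} as crossings of the positive $x_1$-axis, and on the spiral hypothesis $|\dot\phi|\to\infty$ which alone can absorb the naive $\int_{F_k} t^{w_i}\, dt$ term.
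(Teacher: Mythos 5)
Your proposal is correct. The decomposition $a_i=p_i+r_i$, the use of Lemma \ref{L5} with vanishing boundary terms to extract the factor $\tfrac{\a_i+1}{\a_i+\b_i+2}<1$, and the pointwise bound $|r_i(\kappa(t))|\leq C t^{w_i}$ all match the paper. The one genuinely different step is the remainder estimate. You bound $|\dot\kappa_2|\leq 1+t|\dot\phi|$ directly and then absorb the resulting $\int_{F_k}t^{w_i}\,dt$ term (which has no $\dot\phi$) by invoking the spiral condition $|\dot\phi(t)|\to\infty$. The paper avoids this by an integration by parts: setting $R_i(t)=\int_{t_{k+1}}^t r_i(\kappa)\,ds$ and using $\sin\phi(t_k)=\sin\phi(t_{k+1})=0$, one gets
\[
\int_{F_k} r_i(\kappa)\dot\kappa_2\,dt=\int_{F_k}\bigl(t\,r_i(\kappa)-R_i\bigr)\cos\phi\,\dot\phi\,dt,
\]
which is bounded by $C\int_{F_k}t^{w_i+1}|\dot\phi|\,dt\leq Ct_k\int_{F_k}t^{w_i}|\dot\phi|\,dt$. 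Thus the paper's version only needs $t_k\to 0$, not $|\dot\phi|\to\infty$; it is slightly stronger since it would cover more general phases. Your route is more elementary (no integration by parts) but leans on the spiral hypothesis at a point where the paper does not — both are logically sound for the lemma as stated.
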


\begin{proof}  By \eqref{I->J} with vanishing boundary contributions, we obtain
\[
  \begin{split}
    \Big|  \int_{F_k}  p_i(\kappa)  \dot\kappa_2 dt \Big|
    &
     = |   I_{2k \pi, 2(k+1)\pi } ^{\a_i\b_i} | =
     \frac{\a_i+1}{\a_i+\b_i+2}
     |   J_{2k \pi, 2(k+1)\pi } ^{\a_i\b_i} |
     \\
     &
     \leq
    \frac{\a_i+1}{\a_i+\b_i+2}
      \int_{F_k}   t ^{\a_i+\b_i +2} |\dot\phi |  dt,
  \end{split}
\]
so we are left with the estimate of the integral of $r_i$. Using $\kappa_2 = t \sin(\phi(t))$ we get
\[
  \begin{split}
     \int_{F_k} r_i (\kappa) \dot \kappa_2 dt &
     =
     \int_{F_k} r_i (\kappa) (\sin (\phi) + t \cos(\phi )\dot\phi ) dt
     \\
     &
     =
     \int_{F_k}  (t r_i (\kappa) -R_i )  \cos(\phi )\dot\phi dt ,
     \end{split}
\]
where we let
\[
    R_i(t) = \int_{t_{k+1}}^{t} r_i(\kappa) ds.
\]

From \eqref{eq2},  we have $|\kappa (t)| \leq   t$ for all $t\in [0,1]$. 
By part (ii) of Theorem \ref{giallo} we have $|r_i(x)|\leq  C \| x\| ^{w_i}$ for all $x\in\R^n$ near $0$, with $w_i = \a_i+\b_i+2$. It follows that $|r_i(\kappa (t))|\leq C t ^{w_i}$ for all $t\in [0,1]$,  and $|R_i(t)| \leq C t^{w_i+1}$.
We deduce that
\[
 \Big|
 \int_{F_k} r_i (\kappa) \dot \kappa_2 dt \Big|\leq  C  \int_{F_k}  t^{\a_i+\b_i+3 }  |\dot \phi| dt,
\]
and the claim follows.
\end{proof}

Now we study the integrals in \eqref{erri4}.  Let us introduce the following   notation
\[
 \De_{r_i}^\g=\big(r_i(\k)-r_i(\bar\k)\big)\kk_2\qquad \textrm{and}\qquad  
  \de_{r_i}^\g=r_i(\k)-r_i(\bar\k).
 \]

\begin{lemma}
 \label{stimacor}
 Let $\g$ be a horizontal spiral with phase $\phi$. Then for any $j=3,\.,n$ and for $|\e_j|<t_{h_j\eta_j}$,  we have
 \begin{equation}
 \Big|\int_{B_j}\De_{r_i}^\g(t)dt\Big|\leq C  |\e_j|\int_{B_j} t^{w_i}|\pphi(t)|dt,
 \end{equation}
 where $C>0$ is constant. 
\end{lemma}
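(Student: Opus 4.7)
The plan is to mimic the strategy of Lemma \ref{stimacut}: a pointwise bound on $\de_{r_i}^\g$ via the mean value theorem and the derivative estimate in Theorem \ref{giallo}(ii), followed by an integration by parts on $B_j$ that absorbs the non-trivial $\sin\phi$ contribution in $\kk_2$.

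First, since $\bar\k = \k+(\e_j,0)$ on $B_j$, I would write
\[
\de_{r_i}^\g(t) = -\e_j\int_0^1 \d_{x_1}r_i\bigl(\k(t)+(s\e_j,0)\bigr)\,ds.
\]
On $B_j$ one has $t\geq t_{h_j\eta_j}>|\e_j|$, and $|\k(t)|\leq t$ because $|\k(t)| = t$ by \eqref{eq2}; since $w_1=w_2=1$, the pseudo-norm of $\k(t)+(s\e_j,0)$ is bounded by $Ct$. By Theorem \ref{giallo}(ii) with $w_1=1$, this gives $|\d_{x_1}r_i|\leq C_2 (Ct)^{w_i-1}$, hence
\[
|\de_{r_i}^\g(t)|\leq C|\e_j|\, t^{w_i-1},\qquad t\in B_j.
\]

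Next, I would exploit $\kk_2 = \sin\phi + t\cos\phi\,\pphi$ and set $G(t)=\int_{t_{h_j\eta_j}}^{t}\de_{r_i}^\g(s)\,ds$. Integrating by parts,
\[
\int_{B_j}\de_{r_i}^\g \sin\phi\,dt = \bigl[G\sin\phi\bigr]_{B_j} - \int_{B_j} G\cos\phi\,\pphi\,dt.
\]
The boundary contribution vanishes: at $t=t_{h_j\eta_j}$ by $G(t_{h_j\eta_j})=0$, and at $t=t_{h_j}$ by $\sin(2\pi h_j)=0$. Adding the remaining piece $\int_{B_j}\de_{r_i}^\g\, t\cos\phi\,\pphi\,dt$ produces
\[
\int_{B_j}\De_{r_i}^\g\,dt = \int_{B_j}\bigl(t\,\de_{r_i}^\g(t) - G(t)\bigr)\cos\phi\,\pphi\,dt.
\]
The pointwise bound yields $|t\,\de_{r_i}^\g(t)|\leq C|\e_j|\, t^{w_i}$ and, since $w_i\geq 2$, also $|G(t)|\leq C|\e_j|\, t^{w_i}$ by direct integration of $s^{w_i-1}$. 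Using $|\cos\phi|\leq 1$ we conclude
\[
\Big|\int_{B_j}\De_{r_i}^\g\,dt\Big|\leq C|\e_j|\int_{B_j} t^{w_i}|\pphi(t)|\,dt,
\]
which is the claim.

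The only subtle point is the IBP step: bounding $|\de_{r_i}^\g\,\kk_2|$ directly by $C|\e_j|t^{w_i-1}(1+t|\pphi|)$ would leave an extraneous term $\int_{B_j} t^{w_i-1}\,dt$ not involving $|\pphi|$, which is too large for the later end-point analysis; the cancellation in the boundary terms (ensured by $\phi = 2\pi h_j$ at $t_{h_j}$ and the choice of the antiderivative vanishing at $t_{h_j\eta_j}$) is precisely what converts $\sin\phi\,dt$ into $\cos\phi\,\pphi\,dt$ and restores the correct power of $t$ paired with $|\pphi|$.
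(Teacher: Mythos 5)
Your proof is correct and follows essentially the same route as the paper's: a mean-value bound on $\de_{r_i}^\g$ using the derivative estimate from Theorem~\ref{giallo}(ii) (you use the integral-form of the mean value theorem, the paper invokes Lagrange's theorem; the resulting bound $|\de_{r_i}^\g(t)|\le C|\e_j|t^{w_i-1}$ is identical), followed by the same integration by parts in the $\sin\phi$ part of $\kk_2$ with vanishing boundary terms (since $G(t_{h_j\eta_j})=0$ and $\sin\phi(t_{h_j})=\sin(2\pi h_j)=0$), leading to the same final estimate. The only cosmetic differences are the explicit integral form for the mean value step and your remark on why the IBP is needed; the parenthetical appeal to $w_i\ge 2$ when bounding $G$ is unnecessary but harmless.
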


\begin{proof} For $t\in B_j$ we have $\k_2(t)=\bar\k_2(t)$ and $\bar\k_1(t)=\k_1(t)+\e_j$. By Lagrange Theorem it follows that
 \[
  \de_{r_i}^\g(t)=\e_j\d_1r_i(\k^*(t)),
 \]
 where $\k^*(t)=(\k_1^*(t),\k_2(t))$ and $\k_1^*(t)=\k_1(t)+\de_j$, $0<\de_j<\e_j$. By Theorem \ref{giallo} we have  $|\d_1r_i(x)|\leq C \|x\|^{w_i-1}$ and so, also using $\delta_j<\e_j<t$,
 \[
  |\d_1r_i(\k^*(t))|\leq C \|\k^*(t)\|^{w_i-1}= C \Big(|\k_1(t)+\de_j|+|\k_2(t)|\Big)^{w_i-1}\leq  C t^{w_i-1}.
 \]
 This implies $|\de_{r_i}^\g(t)|\leq  C  |\e_j|t^{w_i-1}$.

 Now, the integral we have to study is
 \begin{align*}
  \int_{B_j}\De_{r_i}^\g dt = \int_{B_j}\de_{r_i}^\g\kk_2dt = \int_{B_j}\de_{r_i}^\g\sin\phi dt+\int_{B_j}\de_{r_i}^\g t\pphi\cos\phi dt.
 \end{align*}
 We integrate by parts the integral  without $\pphi$, getting
 \[
  \int_{B_j}\de_{r_i}^\g\sin\phi dt=\Big[\sin\phi(t)\int_{t_{h_j\eta_j}}^t\de_{r_i}^\g ds\Big]_{t=t_{h_j\eta_j}}^{t=t_{h_j}}-
  \int_{B_j}\Big\{\pphi\cos\phi\int_{t_{h_j\eta_j}}^t\de_{r_i}^\g ds\Big\} dt.
 \]
 Since the boundary term is 0, we obtain
 \[
   \int_{B_j}\de_{r_i}^\g\kk_2dt=\int_{B_j}\Big\{t\de_{r_i}^\g-\int_{t_{h_j\eta_j}}^t\de_{r_i}^\g ds\Big\}\pphi\cos\phi dt,
 \]
 and thus 
 \begin{align*}
  \Big|\int_{B_j}\de_{r_i}^\g\kk_2dt\Big|&\leq
  \int_{B_j}\Big\{t|\de_{r_i}^\g|+\int_{t_{h_j\eta_j}}^t|\de_{r_i}^\g| ds\Big\}|\pphi|dt 
  \leq C |\e_j|\int_{B_j}t^{w_i}|\pphi|dt.
 \end{align*}
\end{proof}

\begin{remark}
\label{TECH2}  
We stress again the fact that, when the coefficients $a_i$ depend on all the variables $x_1,\ldots,x_n$,  the computations above become less clear. As a matter of fact, there is a  non-commutative effect of the devices due to the varying coordinates $\gamma_3,\ldots,\gamma_n$ that modifies the coefficients of the parameters $\e_j$.
\end{remark}

\section{Solution to the end-point equations}

\label{SYS}

In this section we solve the system of equations $E_i^{k,\bar{\mathcal E}}=0$,    $i=3,\ldots,n$. The homogeneous polynomials $p_j$ are of the form $p_j(x_1,x_2) = x_1^{\a_j+1} x_2 ^ {\b_j}$, as in \eqref{pipi}.

The quantities \eqref{erri2}, \eqref{erri3} and \eqref{erri4} are, respectively,
 \begin{equation}
\begin{split}
&\int_{F_k} \Delta_i^\g dt = I^{\a_i\b_i}_k+\int_{F_k} r_i(\k(t)) dt,
\\
 &\int_{A_j} \Delta_i ^\g dt = 0,
 \\
 &\int_{B_j} \Delta_i ^\g dt
 = -\Delta ^{\a_i\b_i}_{{h_j}\eta_j\e_j} + \int_{B_j}\De_{r_i}^\g dt,
 \end{split}
 \end{equation}
 where we used the short-notation $I^{\a_i\b_i}_k=I^{\a_i\b_i}_{2\pi k, 2\pi (k+1) }$.
So  the end-point equations $E_i^{k,\bar{\mathcal E}}=0$ read
 \begin{equation} \label{EQFIN}
 f_i(\e) = b_i ,
 \quad i=3,\ldots,n.
 \end{equation}
 with
 \[
f_i(\e) =  
  \sum _{j=3}^n \Big(\Delta ^{\a_i\b_i}_{{h_j}\eta_j\e_j}-\int_{B_j}\De_{r_i}^\g dt\Big)
  \quad \textrm{and}
  \quad
  b_i=
  \int_{F_k} \Delta_i^\g dt.
 \]
  We will regard $k$, ${h_j}$, and $\eta_j$
  as parameters and we will solve the system of equations \eqref{EQFIN} in the unknowns $\e=(\e_3,\ldots,\e_n)$. The functions $f_i:\R^{n-2}\to\R$ are analytic and the data $b_i$ are estimated from above by  \eqref{eqstimacut}:
  \begin{equation}
  \label{b_i}
  |b_i| \leq  
 \int_{F_k}   t ^{w_i } |\dot\phi |  dt  .
   \end{equation}

  \begin{theorem} \label{stilx}
  There exist real parameters   $\eta_3,\ldots, \eta_n>0$ and integers
  $h_3>\ldots>h_n$   such that  for all $k\in \N $ large enough the system of equations \eqref{EQFIN} has a unique solution $\e = (\e_3,\ldots,\e_n)$ satisfying
  \begin{equation}
  \label{biba}
  |\e|\ \leq C \sum_{i=3}^n
| b_i |,
  \end{equation}
  for a constant $C>0$ independent of $k$.
  \end{theorem}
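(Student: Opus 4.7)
My approach is to linearize \eqref{EQFIN} at $\e=0$ and apply a Banach fixed-point argument. Since $f_i(0)=0$, expand
\[
  f_i(\e) = (M\e)_i + Q_i(\e),\qquad |Q(\e)| \leq C_0|\e|^2
\]
on a fixed neighborhood of the origin, where $M=Df(0)$. Differentiating \eqref{DELTA} in $\e_j$, and observing that $\int_{B_j}\De_{r_i}^\g\,dt$ depends on $\e_{j'}$ only for $j'=j$ (the device on $B_j$ is governed by $\e_j$ alone), an integration by parts in the spirit of Lemma \ref{stimacor} yields
\[
  M_{ij} = (\a_i+1)\,I^{\a_i-1,\b_i}_{2h_j\pi,\,2h_j\pi+\eta_j} + R_{ij},\qquad |R_{ij}| \leq C\int_{B_j} t^{w_i}|\pphi|\,dt.
\]
The quadratic control on $Q$ follows from \eqref{DELTA1}, which makes $\Delta^{\a_i\b_i}_{h_j\eta_j\e_j}$ an explicit polynomial in $\e_j$, together with the analyticity of the $r_i$-integrals.

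\textbf{Invertibility of $M$.} By \eqref{stingaling} the principal part of $M_{ij}$ is of order $\eta_j^{\b_i+1} t_{h_j}^{w_i-1}$, while $|R_{ij}|\lesssim \eta_j\,t_{h_j}^{w_i}$; the relative size of the remainder is therefore $\eta_j^{-\b_i} t_{h_j}$, which I can make arbitrarily small by holding the $\eta_j$'s in a fixed compact subinterval of $(0,\pi/4)$ and taking all $h_j$'s large. It then suffices to invert the model matrix
\[
  M^{*}_{ij} := (\a_i+1)\,\eta_j^{\b_i+1}\,t_{h_j}^{w_i-1}.
\]
This is a two-parameter generalized Vandermonde matrix: by \eqref{INJ}, the pairs $(\b_i+1,\,w_i-1)$ are pairwise distinct, so the monomials $(\eta,t)\mapsto \eta^{\b_i+1} t^{w_i-1}$ are linearly independent as functions on $(0,\infty)^2$. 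Hence $\det M^{*}$ is a non-trivial real-analytic function of the parameters $(\eta_j,t_{h_j})_{j=3}^{n}\in(0,\infty)^{2(n-2)}$, and its zero set is a proper analytic subvariety. One can therefore fix $\eta_3,\ldots,\eta_n\in(0,\pi/4)$ and integers $h_3>h_4>\ldots>h_n$, with $h_n$ large enough to simultaneously secure the perturbation estimate $\|M-M^{*}\|\leq \tfrac 12\|(M^{*})^{-1}\|^{-1}$. A Neumann-series argument then gives $\|M^{-1}\|\leq 2\|(M^{*})^{-1}\| =: C_1$, a constant depending only on the fixed parameters and not on $k$.

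\textbf{Contraction and conclusion.} Define $T(\e):=M^{-1}\bigl(b-Q(\e)\bigr)$ on the closed ball $\overline{B_\rho(0)}$ with $\rho := 2C_1\sum_{i=3}^n|b_i|$. By \eqref{b_i} and \eqref{eqstimacut}, each $|b_i|$ is bounded by an integral tending to $0$ as $k\to\infty$, so for $k$ large one has $C_0 C_1\rho \leq \tfrac 12$; then $T$ is $\tfrac 12$-Lipschitz and maps $\overline{B_\rho(0)}$ into itself. The unique fixed point $\e$ solves \eqref{EQFIN} and satisfies $|\e|\leq \rho = 2C_1\sum_i|b_i|$, which is \eqref{biba}. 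The main obstacle is the invertibility step: when several coordinates share the same degree $w_i$, a one-parameter Vandermonde in $t_{h_j}$ alone is singular, and one is forced to exploit both $\eta_j$ and $t_{h_j}$ as free parameters. The injectivity \eqref{INJ} of $i\mapsto(\a_i,\b_i)$---itself a consequence of the stratified-basis property and of the commutativity \eqref{12ML}---is precisely what keeps the generalized Vandermonde determinant from vanishing identically.
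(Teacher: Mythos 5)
Your overall strategy --- linearize at $\e=0$, bound the data $b_i$, and close with a Banach fixed point --- is essentially equivalent to the paper's use of the inverse function theorem, and the bound \eqref{biba} would indeed follow once the linearization is invertible with $\|M^{-1}\|$ controlled uniformly in $k$. The expansion $M_{ij}=(\a_i+1)I^{\a_i-1,\b_i}_{2h_j\pi,2h_j\pi+\eta_j}+R_{ij}$ with $|R_{ij}|\lesssim\int_{B_j}t^{w_i}|\pphi|\,dt$ also matches the paper's \eqref{aij}. The obstacle is exactly where you flag it, and your treatment of it has a genuine gap.

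You reduce to the model matrix $M^*_{ij}=(\a_i+1)\eta_j^{\b_i+1}t_{h_j}^{w_i-1}$ and observe that the entry-wise relative error $|R_{ij}|/|M^*_{ij}|\lesssim\eta_j^{-\b_i}t_{h_j}$ can be made small by taking $h_j$ large. But entry-wise relative smallness of the perturbation does \emph{not} imply $\|M-M^*\|\le\tfrac12\|(M^*)^{-1}\|^{-1}$: that inequality involves the smallest singular value of $M^*$, which can be far smaller than the smallest entry when columns or rows are nearly dependent. Moreover, as all $h_j\to\infty$ the entries of $M^*$ shrink, so $\|(M^*)^{-1}\|^{-1}\to 0$ as well, and you give no estimate of the relative rates. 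Saying that $\det M^*$ is a nontrivial analytic function of $(\eta_j,t_{h_j})$ only yields that the determinant is nonzero for generic parameter choices --- a qualitative statement that provides no lower bound on $\sigma_{\min}(M^*)$ and hence no way to verify the hypothesis of your Neumann-series step. A rescaling does not save it: since $M^*_{ij}$ depends on both $\eta_j^{\b_i+1}$ and $t_{h_j}^{w_i-1}$ with exponents varying in $i$, there is no diagonal pre/post-scaling making $M^*$ uniformly well-conditioned.

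The paper closes precisely this gap with a quantitative inductive argument. It Laplace-expands $\det A$ along the first column, uses the order $(\a,\b)<(\a',\b')$ iff $\a+\b<\a'+\b'$, or $\a+\b=\a'+\b'$ and $\b<\b'$, to identify the minimal index $\iota$, and then shows --- by first shrinking $\eta_3$ (to beat indices with the same $\a+\b$ but larger $\b$) and then taking $h_3$ large (to beat indices with larger $\a+\b$) --- that $|a_{\iota 3}P_\iota|$ dominates the sum of all other terms, giving $|\det A|\ge\tfrac12|a_{\iota 3}P_\iota|>0$. This hierarchy-of-scales argument, iterated through the minors $P_i$, is what replaces your appeal to generic nonvanishing of the generalized Vandermonde determinant. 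Your identification of the two-parameter Vandermonde structure (with the pairs $(\b_i+1,w_i-1)$ distinct by \eqref{INJ}) is a nice observation and does show $\det M^*\not\equiv 0$, but without a quantitative lower bound on this determinant along the admissible lattice of parameters, the contraction step cannot be closed.
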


  \begin{proof}
  We will use the inverse function theorem. Let $A=\big( a_{ij}\big) _{i,j =3,\ldots,n}\in M_{n-2} (\R)$ be  the Jacobian matrix of $f= (f_3,\ldots, f_n)$ in the variables $\e=(\e_3,\ldots,\e_n)$ computed at 
  $\e=0$. By \eqref{DELTA} and Lemma \ref{stimacor}
  we have
  \begin{equation} \label{aij}
  a_{ij} = \frac{\partial f_i(0) }{\partial \e_j} = 
 (\a_i+1)
  I_{h_j\eta_j}^{\a_i-1,\b_i}  + o(I_{h_j\eta_j}^{\a_i-1,\b_i} ).
  \end{equation}
  Here, we are using the fact that for $h_j\to\infty$ we have
  \[
   \int_{B_j}   t ^{w_i } |\dot\phi |  dt = o \Big( 
   \int_{B_j}   t ^{w_i -1} |\dot\phi |  dt\Big).
   \]
   The proof of Theorem \ref{stilx}
 will be complete if we show that the matrix $A$ is invertible.

We claim that there exist real parameters $\eta_3,\ldots, \eta_n>0$ and positive integers
  $h_3>\ldots>h_n$ such that
  \begin{equation}
\label{detto}
 \det (A) \neq 0.
    \end{equation}

 The proof is by induction on  $n$. When   $n=3$,  the matrix $A$ boils
  down to the  real number $a_{33}$. From \eqref{aij} and  \eqref{stingaling} we deduce that for any $\eta_3\in (0,\pi/4)$ we have
  \begin{equation}
  \begin{split}
   \label{stile}
 |a_{3 3}|  &
 \geq \frac 12  (\a_3+1) | I_{h_3\eta_3}^{\a_3-1,\b_3}|\geq  c_{\a\b}
 j^{\a_3-1,\b_3} _{ h_3\eta_3}>0.
\end{split}
  \end{equation}
   We can choose $h_3\in \N $ as large as we wish.

  Now we prove the inductive step. We assume that   \eqref{detto}
 holds when $A$ is a $(n-3)\times (n-3)$ matrix, $n\geq 4$. We develop $\det(A)$ with respect to the first column using  Laplace formula:
 \begin{equation}
  \label{eq33}
  \begin{split}
   \det( A) = \sum_{i=3}^n(-1)^{i+1} a_{i3}  P_i ,
  \end{split}
 \end{equation}
 where
 \[
  P_i=P_i( a_{43}  ,\dots, a_{4n} ,\dots,
   \hat{a}_{ i3} ,\dots,\hat{a}_{in} ,\dots,
   a_{n3}   ,\dots,a_ {nn})
   \]
    are the   determinants of the minors.
 By the inductive assumption, there exist $\eta_4,\ldots,\eta_{n}\in (0,\pi/4)$ and  integers $ h_4>\dots>h_n $ such that $|P_i|>0$.
 By \eqref{stingaling}, for any  $\eta_3\in (0,\pi/4)  $ we have the estimates
 \begin{equation}
 \label{pinco}
 c_0  j ^{\a_i-1,\b_i}_{h_3\eta_3} \leq
  |a_{i3}|
 \leq
 C_0  j ^{\a_i-1,\b_i}_{h_3\eta_3} ,
 \end{equation}
for absolute constants $0<c_0<C_0$.
 The leading (larger) $|a_{i3}|$ can be found in the following way.
 On the set   $\mathcal A = \{ (\a_i,\b_i) \in \N\times \N : i=3 ,\ldots, n\}$ we introduce the order  $(\a,\b)<(\a',\b')$ defined by  the conditions
 $\a+\b < \a'+\b'$, or $\a+\b = \a'+\b'$ and  $\b<\b'$.
  We denote by $(\a_\iota,\b_\iota)\in \mathcal A$, for some $\iota =3,\ldots,n$, the minimal element with respect to this order relation.

 We claim that,  given $\e_0>0$, for all $h_3 > h_4$ large enough and for some $ 0< \eta_3<\pi/4$  the following inequalities  hold:
\begin{equation}
\label{stillox}
|a_{i3} | |P_i|\leq  \e_0|a _{\iota 3}P_{\iota}|, \quad \textrm{for}\quad i\neq \iota .
\end{equation}
In the case when $i=3,\ldots,n$ is such that $\a_i+ \b_i =\a_\iota+\b_\iota $, then we have $\b_i>\b_\iota$.
By \eqref{pinco} and \eqref{juwel}, inequality \eqref{stillox} is implied by $\eta_3^{\b_i -\beta_\iota} |P_i| \leq \e_0|P_\iota|$, possibly for a smaller $\e_0$. So we fix $\eta_3\in (0,\pi/4)$ independently from $h_3$ such that
\[
 0<\eta_3 \leq \min \Big\{ \Big(\frac{\e_0
   |P_\iota|}{ |P_i|}\Big) ^{ 1/(\b_i-\b_\iota)}: i\neq \iota \Big\}.
\]

In the case when $i=3,\ldots,n$ is such that $\a_i+ \b_i >\a_\iota+\b_\iota  $,
inequality \eqref{stillox} is implied by
\[
\int_{B_3}t^{\a_i+\b_i }|\pphi(t)| dt \leq
   \e_0\eta_3 ^{\b_\iota-\b_i }\frac{|P_\iota|}{|P_i|}
\int_{B_3} t^{\a_\iota+\b_\iota}|\pphi(t)| dt.
\]
This  holds for all  $h_3\in \N$ large enough. 

Now we can estimate from below the determinant of $A$ using \eqref{stillox}. We have
 \[
 |\det(A)| \geq | a _{\iota 3}  P_{\iota}|- \sum_{i\neq \iota} |a_{i3} | |P_i|\geq
 \frac 12 | a _{\iota3}  P_{\iota}|
 \]
 and the last inequality holds for all $h_3\in \N$ large enough, after fixing    $\eta_3>0$.
 This ends the proof of the theorem.
 \end{proof}

  \section{Nonminimality of the spiral}
  
  \label{seven}
 
 In this section we prove Theorem \ref{MAIN}. Let $\gamma\in AC([0,1];M)$ be a horizontal   spiral of the form \eqref{eq2}. We work in exponential coordinates of the second type centered at $\gamma(0)$.

We fix on $\D$ the metric $g$ making orthonormal the vector fields $X_1$ and $X_2$ spanning $\D$.
This is without loss of generality, because any other metric is equivalent to this one in a neighborhood of the center of the spiral.
With this choice, the length of  $\gamma$ is the standard length of its horizontal coordinates and for a spiral as in \eqref{eq2} we have
\begin{equation}
  \label{eq3}
    L(\gamma) =\int_0^1{|\dot{\kappa}(t)|dt} 
  =\int_0^1{\sqrt{1+t^2\pphi(t)^2}}dt.
 \end{equation}
In particular,   $\gamma$ is rectifiable precisely when   $t\dot\phi \in L^1(0,1)$,
and $\k$ is a Lipschitz curve in the plane precisely when  $t \dot\phi \in L^ \infty(0,1)$.

  For  $k\in \N$ and $\bar {\E} = (\E_3,\ldots,\E_n)$,
 we denote by $\Dev(\gamma;k,\bar{\E})$ the curve constructed in  Section \ref{sez2}.
 The devices  $\E_j =(h_j,\eta_j,\e_j)$
 are chosen in such a way that the parameters $h_j,\eta_j$ are fixed as in Theorem \ref{stilx}
 and $\e_3, \ldots,\e_n$ are the unique solutions to the system \eqref{EQFIN}, for $k$ large enough. In this way the curves $\gamma$ and $  \Dev(\gamma;k,\bar{\E})(1)$ have the same initial and end-point.

 We claim that for $k\in\N$ large enough the length of $\Dev(\gamma;k,\bar{\E})$ is less than the length of $\gamma$. We denote by $
 \Delta L(k)  =
    L(\Dev(\gamma;k,\bar{\E})) -  L(\gamma) $
    the    gain of length and, namely, 
 \begin{equation} \label{vieri}
 \begin{split}
 \Delta L(k)  
 &
 =
 \int_{F_k}
  \sqrt{1+t^2\pphi(t)^2}
  dt
 -\Big( t_k-t_{k+1} +2 \sum_{j=3}  ^n
 |\e_j| \Big)
\\
& =
 \int_{F_k}
 \frac{t^2\pphi(t)^2}
 {\sqrt{1+t^2\pphi(t)^2}+1}
 dt-2 \sum_{j=3}  ^n
 |\e_j|.
 \end{split}
 \end{equation}
By \eqref{biba}, there exists a constant $C_1>0$ independent of $k$ such that 
the solution
$\e=(\e_3, \ldots,\e_n)$ to the  end-point equations   \eqref{EQFIN}
satisfies 
\begin{equation}
\label{viola}
|\e| \leq C_1 \sum_{i=3}^n | I_{ k } ^{\a_i\b_i} | \leq 
C_2 \sum _{i=3}^n  \int_{F_k } t^{w_i}|\dot\phi(t)| dt
\leq C_3  \int_{F_k} t^{2}|\dot\phi(t)| dt.
\end{equation}
We used \eqref{stingaling} and the fact that $w_i\geq 2$.
The new constants $C_2,C_3$ do not depend on $k$.

By \eqref{vieri} and \eqref{viola}, the inequality $\Delta L(k)> 0$ is implied by
\begin{equation}
\label{DIS1}
 \int_{F_k}
 \frac{t^2\pphi(t)^2}
 {\sqrt{1+t^2\pphi(t)^2}+1}
 dt>C_4  \int_{F_k} t^{2}|\dot\phi(t)| dt,
\end{equation}
 where $C_4$ is a large constant independent of $k$. 
 For any $k\in \N$, we split the interval   $  F_k= F_k^+\cup F_k^-$  where 
  \[
   F_k ^+ =  \{t\in F_k:  
    | t \pphi(t)|\geq 1\}
   \quad
   \textrm{and}
   \quad
  F _k ^- =   \{t\in F_k: | t \pphi(t)|<1\}.
 \]
 On the set   $F_k^+$ we have  
 \begin{equation} \label{INES1}
 \begin{split}
  \int_{F_k^+  }
  \frac{t^2\pphi(t)^2}
 {\sqrt{1+t^2\pphi(t)^2}+1} dt \geq 
\frac 13    \int_{F_k^+  }
 {t|\pphi(t)|}
  dt
  \geq  C_4  \int_{F_k^+  }
 {t^2 |\pphi(t)|}
  dt,
  \end{split}
  \end{equation}
  where the last inequality holds for all $k\in \N$ large enough, and namely as soon as 
    $ 3 C_4  t_k <1$.
    On the set   $F_k^-$  we have  
 \begin{equation}
\label{INES2}
 \begin{split}
  \int_{F_k^-  }
  \frac{t^2\pphi(t)^2}
 {\sqrt{1+t^2\pphi(t)^2}+1} dt \geq 
\frac 13   \int_{F_k^+  }
 {t^2 |\pphi(t)|^2}
  dt
  \geq    
C_4    \int_{F_k^-  }
 {t^2 |\pphi(t)|}
  dt,
  \end{split}
  \end{equation}
  where the last inequality holds for all   $k\in \N$ large enough, by our assumption on the spiral
  \[
  \lim_{t\to0^+}  |\dot\phi(t) |=\infty.
  \]
  
  Now \eqref{INES1} and \eqref{INES2} imply \eqref{DIS1} and thus $\Delta L(k)>0$.
  This ends the proof of Theorem \ref{MAIN}.

\end{document}